\def\0{\emptyset}
\def\q{\hfill\rule{1ex}{1ex}}
\begin{document}
\newtheorem{claim}{Claim}[section]
\newtheorem{theorem}{Theorem}[section]
\newtheorem{corollary}[theorem]{Corollary}
\newtheorem{definition}[theorem]{Definition}
\newtheorem{conjecture}[theorem]{Conjecture}
\newtheorem{question}[theorem]{Question}
\newtheorem{lemma}[theorem]{Lemma}
\newtheorem{proposition}[theorem]{Proposition}
\newenvironment{proof}{\noindent {\bf
Proof.}}{\rule{3mm}{3mm}\par\medskip}
\newcommand{\remark}{\medskip\par\noindent {\bf Remark.~~}}
\newcommand{\pp}{{\it p.}}
\newcommand{\de}{\em}

\title{\bf The $g$-good neighbor conditional diagnosability of locally exchanged twisted cubes}

\author{Huiqing Liu\footnote {Hubei Key Laboratory of Applied Mathematics, Faculty of Mathematics and Statistic, Hubei University, Wuhan 430062,
PR China}, Xiaolan Hu\footnote{School of Mathematics and Statistics $\&$ Hubei Key Laboratory of Mathematical Sciences, Central China Normal University,
Wuhan 430079, PR China} $^*$, Shan Gao$^*$
}

\date{}
\maketitle \baselineskip 16.5pt

\begin{abstract}

Connectivity and diagnosability are important parameters in measuring the fault tolerance and reliability of interconnection networks.
The $R^g$-vertex-connectivity of a connected graph $G$ is the minimum cardinality of a faulty set $X\subseteq V(G)$ such that $G-X$ is
disconnected and every fault-free vertex has at least $g$ fault-free neighbors. The $g$-good-neighbor conditional diagnosability is defined as the maximum cardinality of a $g$-good-neighbor conditional faulty set that the system can guarantee to identify.  The interconnection network considered here is the locally exchanged twisted cube $LeTQ(s,t)$. For $1\leq s\leq t$ and $0\leq g\leq s$,
we first determine the $R^g$-vertex-connectivity of $LeTQ(s,t)$, then establish the $g$-good neighbor conditional diagnosability of $LeTQ(s,t)$ under the PMC model and MM$^*$ model, respectively.

\vskip 0.1cm

{\bf Keywords:} Locally exchanged twisted cubes; $g$-good neighbor; $R^g$-vertex-connectivity; Conditional diagnosability; PMC model; MM$^*$ model
\end{abstract}

\section{Introduction}

A multiprocessor system comprises two or more processors, and various processors exchange information via links between them. As the size of multiprocessor systems increase, processor failure is inevitable. When failure happens, we need to find the faulty processors to repair or replace them. Therefore, it is  crucial to tell the faulty processors from the good ones. The process of identifying faulty processors by analyzing the outcome of mutual tests among processors is called system-level diagnosis. There are several different diagnosis models being proposed to determine the diagnosability of a system. In this paper, we use the PMC model introduced by Preparata, Metzem and Chien \cite{Preparata} and MM$^*$ model suggested by Sengupta and Dahbura \cite{Sengupta}. In the PMC model, tests are performed between two adjacent processors. In the MM$^*$ model, every processor must test another two processors if it is adjacent to them.

For any processor in a system, it is impossible to determine whether it is fault-free or not if all its neighbors are faulty. Therefore, for a system, its diagnosability is restricted by its minimum degree. However, the probability of a faulty set containing all the neighbors of a processor is very low  in large multiprocessor systems. To obtain a more practical diagnosability,  Lai {\em et al.} \cite{Lai} introduced conditional diagnosability under the assumption that all the neighbors of any processor in a multiprocessor system cannot be faulty at the same time. The conditional diagnosability of interconnection networks has been extensively investigated, see \cite{Cheng,Hsieh1,Hsieh2,Hsu,Lin,Yang,ZhuLiu}.

Recently, Peng {\em et al.} \cite{Peng} proposed the $g$-good-neighbor conditional diagnosability, which requires that every fault-free node contains at least $g$ fault-free neighbors, and showed that the $g$-good-neighbor conditional diagnosability of $Q_n$ is $2^g(n-g+1)-1$ under the PMC model, where $0\leq g\leq n-3$. Since then, numerous studies have been investigated under the PMC model and/or MM$^*$ model. Wang and Han \cite{WangH} proved that the $g$-good-neighbor conditional diagnosability of $Q_n$ is $2^g(n-g+1)-1$ under the MM$^*$ model, where $0\leq g\leq n-3$.  Yuan {\em et al.} \cite{Yuan, Yuan2} established the $g$-good-neighbor diagnosability of the $k$-ary $n$-cubes under the PMC model and MM$^*$ model, respectively. Lin {\em et al.} \cite{LinXu} considered the $g$-good-neighbor diagnosability of the arrangement graphs under the PMC model and MM$^*$ model, respectively.
Xu {\em et al.} \cite{XuZhou} determined the $g$-good-neighbor conditional diagnosability of complete cubic networks. In \cite{Wei}, Wei and Xu studied the $g$-good-neighbor conditional diagnosabilities of the locally twisted cubes under the PMC and MM$^*$ model, respectively. Li and Lu \cite{Li} considered the $g$-good-neighbor diagnosability of star graphs under the PMC model and MM$^*$ model, respectively.
Cheng {\em et al.} \cite{ChengQiu} obtained the $g$-good-neighbor conditional diagnosabilities of the exchanged hypercube and its generalization under the PMC and MM$^*$ model, respectively. Very recently, Liu {\em et al.} \cite{Liu} determined the $g$-good neighbor conditional diagnosability of twisted hypercubes under the PMC and MM$^*$ model, respectively.

The interconnection network considered here is the locally exchanged twisted cube $LeTQ(s,t)$, which is a novel interconnection based on edge removal from the locally twisted cube $LTQ_{s+t+1}$. A major advantage is that it scales upward with lower edge costs than the locally twisted cube.
The topology is defined with two parameters, which provides more interconnection
flexibility. It maintains many desirable properties of the locally twisted cube, such as recursive
construction, partitionability, Hamiltonicity,  strong connectivity and super connectivity.  All these attractive properties of $LeTQ(s,t)$ make it applicable to large scale parallel computing systems very well.

Our main results are listed below.

\begin{theorem}
For $1\leq s\leq t$ and $0\leq g\leq s$. The $g$-good-neighbor conditional
diagnosability of $LeTQ(s,t)$ under the PMC model is $t_g(LeTQ(s,t))=2^g(s-g+2)-1$.
\end{theorem}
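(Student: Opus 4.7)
\medskip
\noindent\textbf{Proof plan.} The proof naturally splits into two inequalities, and will lean on the $R^g$-vertex-connectivity of $LeTQ(s,t)$ (established earlier in the paper as $2^g(s-g+1)$) together with the classical Dahbura--Masson characterization: under the PMC model, two distinct vertex subsets $F_1,F_2$ are indistinguishable iff there is no edge between $V(G)\setminus(F_1\cup F_2)$ and the symmetric difference $F_1\triangle F_2$. I will prove $t_g(LeTQ(s,t))\le 2^g(s-g+2)-1$ by constructing an indistinguishable pair of $g$-good-neighbor faulty sets, and the matching lower bound by contradiction.

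\medskip
\noindent\textbf{Upper bound.} The plan is to locate a subgraph $H$ of $LeTQ(s,t)$ that is isomorphic to $Q_g$ and whose open neighborhood saturates an $R^g$-cut. Such an $H$ should come naturally from the recursive decomposition of $LeTQ(s,t)$ (the same subgraph that witnesses the tightness of the $R^g$-connectivity). Put $A=V(H)$, $F_1=N_{LeTQ(s,t)}(A)$, and $F_2=A\cup F_1$. By the $R^g$-connectivity value, $|F_1|=2^g(s-g+1)$ and $|F_2|=2^g(s-g+2)$. I then check: (i) both $F_1$ and $F_2$ are $g$-good-neighbor faulty sets (inside $A$ every vertex keeps its $g$ neighbors in $H\cong Q_g$, and the other side retains min-degree $\ge g$ because $F_1$ is an $R^g$-cut); (ii) there is no edge between $V(G)\setminus(F_1\cup F_2)$ and $F_1\triangle F_2=A$, since by construction $N(A)\subseteq F_1\subseteq F_1\cup F_2$. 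Dahbura--Masson then yields that $(F_1,F_2)$ is indistinguishable, giving $t_g\le|F_2|-1=2^g(s-g+2)-1$.

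\medskip
\noindent\textbf{Lower bound.} Suppose for contradiction that there exist distinct $g$-good-neighbor faulty sets $F_1,F_2$ of size at most $2^g(s-g+2)-1$ that are indistinguishable under PMC. First rule out $V(LeTQ(s,t))=F_1\cup F_2$ by a crude size count, using $|F_1|+|F_2|\le 2\bigl(2^g(s-g+2)-1\bigr)<2^{s+t+1}$ in the admissible parameter range; hence $V\setminus(F_1\cup F_2)\ne\emptyset$. By Dahbura--Masson there is no edge between $V\setminus(F_1\cup F_2)$ and $F_1\triangle F_2$, so $F_1\cap F_2$ is a vertex cut. I then verify that $F_1\cap F_2$ is actually a $g$-good-neighbor cut: for $v\in V\setminus(F_1\cup F_2)$, the $\ge g$ good neighbors of $v$ with respect to $F_1$ are forbidden to lie in $F_2\setminus F_1$ and so lie in $V\setminus(F_1\cup F_2)$; for $v\in F_2\setminus F_1$, the same $g$-good-neighbor property of $F_1$ forces $v$'s $\ge g$ neighbors outside $F_1$ to lie entirely in $F_2\setminus F_1$; symmetric argument on $F_1\setminus F_2$ using the $g$-good-neighbor property of $F_2$. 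Applying the earlier $R^g$-connectivity theorem, $|F_1\cap F_2|\ge 2^g(s-g+1)$. WLOG $F_2\setminus F_1\ne\emptyset$; the argument above also shows that $LeTQ(s,t)[F_2\setminus F_1]$ has minimum degree $\ge g$. Invoking the structural fact that every subgraph of $LeTQ(s,t)$ of minimum degree at least $g$ has at least $2^g$ vertices (proved by induction on the recursive structure of $LeTQ(s,t)$, analogous to the hypercube estimate and already implicit in the $R^g$-cut analysis), I conclude $|F_2\setminus F_1|\ge 2^g$, hence $|F_2|\ge 2^g(s-g+1)+2^g=2^g(s-g+2)$, a contradiction.

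\medskip
\noindent\textbf{Main obstacle.} The routine part is the Dahbura--Masson bookkeeping; the technical heart is the structural minimum-order lemma for subgraphs of $LeTQ(s,t)$ with minimum degree $\ge g$, and the careful verification that $F_1\cap F_2$ inherits the $g$-good-neighbor cut property so that the $R^g$-connectivity bound applies. Both rely on the recursive $LeTQ(s,t)=LeTQ(s-1,t-1)$-style decomposition and should be stated as preliminary lemmas before entering the main proof.
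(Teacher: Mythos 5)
Your proposal is correct and follows essentially the same route as the paper: the upper bound via the witness set $A$ with $F_1=N(A)$, $F_2=N[A]$ and the PMC indistinguishability criterion, and the lower bound via ruling out $F_1\cup F_2=V$, showing $F_1\cap F_2$ is an $R^g$-vertex-cut of size at least $2^g(s-g+1)$, and applying the fact that a subgraph of minimum degree $g$ has at least $2^g$ vertices. The only minor inaccuracy is that for $g=s\ge 3$ the witness set induces $LTQ_s$ (a $g$-regular graph) rather than literally $Q_g$, which does not affect the argument.
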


\begin{theorem}
For $1\leq s\leq t$ and $0\leq g\leq s$. The $g$-good-neighbor conditional
diagnosability of $LeTQ(s,t)$  under the MM$^*$ model is
$$t_g(LeTQ(s,t))=\left\{
\begin{array}{ll}
1& \mbox{if $0\leq g\leq 1$, $s=t=1$;}\\
s+1& \mbox{if $g=0$, $s+t\geq 3$;}\\
4& \mbox{if $g=1$, $s=2$ $\&$ $t\geq 2$;}\\
2s+1& \mbox{if $g=1$, $3\leq s\leq t$ or $s=1$ $\&$ $t\geq 2$;}\\
2^g(s-g+2)-1& \mbox{if $g\geq 2$, $2\leq s\leq t$.}
\end{array}
\right.$$
\end{theorem}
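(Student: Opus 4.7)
The plan is to prove matching upper and lower bounds. The generic formula $2^g(s-g+2)-1$ holds across the bulk of the parameter range, with two genuine exceptions ($s=t=1$ and $(g,s)=(1,2)$) treated separately. Throughout, the argument uses the Sengupta--Dahbura characterization of distinguishability under MM$^*$: two distinct faulty sets $F_1,F_2$ are indistinguishable iff (i) there is no edge from $V\setminus(F_1\cup F_2)$ to $F_1\triangle F_2$, and (ii) no vertex of $V\setminus(F_1\cup F_2)$ has two neighbors in a single side $F_i\setminus F_{3-i}$.

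For the upper bound, I would exhibit a specific indistinguishable pair of $g$-good-neighbor conditional faulty sets of the stated size. Using the recursive structure of $LeTQ(s,t)$, fix a $g$-dimensional subcube $H\cong Q_g$ inside the locally twisted factor, and set $F_1:=N_{LeTQ(s,t)}(V(H))$ and $F_2:=F_1\cup V(H)$. A coordinate count of the external neighborhood of $H$, using that $LeTQ(s,t)$ is $(s+1)$-regular, gives $|F_1|=2^g(s-g+1)$ and $|F_2|=2^g(s-g+2)$; both are $g$-good-neighbor conditional faulty sets because every vertex of $H$ retains its $g$ neighbors inside $H$ and every vertex of $V(LeTQ(s,t))\setminus F_2$ still has at least $g$ fault-free neighbors. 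The pair is indistinguishable since $N(V(H))\subseteq F_1\cap F_2$ kills every edge from $V\setminus F_2$ into $F_1\triangle F_2 = V(H)$, and for $g\geq 1$ no outside vertex has two neighbors inside $V(H)$. Hence $t_g(LeTQ(s,t))\leq |F_2|-1 = 2^g(s-g+2)-1$.

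For the lower bound, I would assume for contradiction that two distinct $g$-good-neighbor conditional faulty sets $F_1,F_2$ with $\max\{|F_1|,|F_2|\}\leq 2^g(s-g+2)-1$ are indistinguishable under MM$^*$. Conditions (i)--(ii) force $S:=F_1\cap F_2$ to separate $F_1\triangle F_2$ from $V\setminus(F_1\cup F_2)$ (assuming the latter is nonempty), and together with the $g$-good-neighbor hypothesis they make $S$ an $R^g$-vertex-cut of $LeTQ(s,t)$. The $R^g$-vertex-connectivity result established earlier in the paper then supplies $|S|\geq 2^g(s-g+1)$. Combining this with a lower bound on $|F_1\triangle F_2|$ of order $2^g$---obtained by examining the $g$-good-neighbor requirement on the side $F_i\setminus F_{3-i}$ and the fact that any such side must itself contain the neighborhood of a $g$-regular-in-itself block---yields $|F_i|\geq 2^g(s-g+2)$ for some $i$, the desired contradiction.

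The main obstacles are the following. One must verify that $V(LeTQ(s,t))\setminus(F_1\cup F_2)$ is nonempty, which requires comparing $|F_1\cup F_2|\leq 2(2^g(s-g+2)-1)$ with $|V(LeTQ(s,t))|=2^{s+t+1}$; the hypothesis $s\leq t$ is used here to keep the vertex count comfortably larger and should cover the entire generic regime. The two exceptions need tailored arguments: for $s=t=1$ the graph is small enough to be analyzed by hand and does not accommodate the generic subcube-plus-neighborhood construction, collapsing $t_g$ to $1$; and for $(g,s)=(1,2)$ the extremal $Q_1$-subcube configuration realizes the PMC bound $5$ but fails the MM$^*$ common-neighbor test of condition (ii), so the upper bound construction must be replaced by a smaller indistinguishable pair and the lower bound counting must rule out $|F_1\triangle F_2|$ near its natural threshold, lowering $t_g$ to $4$. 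I expect this last step---the careful case analysis of $|F_1\triangle F_2|$ and the $(g,s)=(1,2)$ exception---to be the most delicate part of the proof.
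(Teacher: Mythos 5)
Your outline founders on the statement of MM$^*$ indistinguishability itself. You take indistinguishability to mean (i) no edge from $V\setminus(F_1\cup F_2)$ to $F_1\triangle F_2$ and (ii) no outside vertex with two neighbors in one side of the difference. Condition (i) is not what Sengupta--Dahbura gives: the correct condition (1) only forbids an outside vertex that has a neighbor in $F_1\triangle F_2$ \emph{and} another neighbor outside $F_1\cup F_2$. A vertex that is isolated in $LeTQ(s,t)-F_1-F_2$ may perfectly well have one neighbor in $F_1-F_2$ and one in $F_2-F_1$ without making the pair distinguishable. This is not a technicality; it is exactly the phenomenon that makes the MM$^*$ answer differ from the PMC answer. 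With your version of (i), the generic lower-bound argument ($F_1\cap F_2$ is an $R^g$-cut, hence $|F_1\cap F_2|\geq 2^g(s-g+1)$, plus $|F_2-F_1|\geq 2^g$) would go through uniformly for all $g\geq 1$ and would ``prove'' $t_1(LeTQ(2,t))\geq 5$ and $t_g(LeTQ(1,1))\geq 2$, contradicting the theorem you are trying to prove. The real work in the paper is precisely the part your proposal omits: showing that, in the non-exceptional regimes, $LeTQ(s,t)-F_1-F_2$ has \emph{no isolated vertices} (Claims 1--3), which requires the triangle-free and $K_{2,3}$-free structure of $LeTQ(s,t)$, degree counts on $L$ versus $R$, and a delicate case analysis (the isolated-vertex set $W$ of size $1$ or $2$, forcing $g=1$, $|F_1\cap F_2|=2s$, $|F_1-F_2|=|F_2-F_1|=1$, and then $4$-cycle arguments in $LeTQ(1,t)$) whose only surviving configurations are exactly the exceptional cases $s=t=1$ and $(g,s)=(1,2)$. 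Only after isolated vertices are excluded does Proposition MM(1) yield $E(F_1\triangle F_2, V-F_1-F_2)=\emptyset$ and hence the $R^g$-cut argument you describe.

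Two further points. Your explanation of the $(g,s)=(1,2)$ exception is misstated: the extremal pair $F_1=N(A)$, $F_2=N[A]$ with $A\cong Q_1$ does \emph{not} fail the MM$^*$ common-neighbor test --- since $N(A)\subseteq F_1\cap F_2$, no outside vertex has any neighbor in $F_1\triangle F_2$, so that pair is indistinguishable under MM$^*$ and yields only the non-tight bound $t_1\leq 5$. The true reason the value drops to $4$ is the existence of a \emph{different}, smaller indistinguishable pair whose complement contains isolated vertices each adjacent to one vertex of $F_1-F_2$ and one of $F_2-F_1$ (the paper's Figure 4 pair, with $F_1'\triangle F_2'=\{00\,0^t0,\,11\,0^t0\}$); an explicit such pair is needed for the upper bound in that case, and your proposal does not supply one. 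Finally, nonemptiness of $V\setminus(F_1\cup F_2)$, which you single out as a main obstacle, is the easy step (a one-line size comparison with $2^{s+t+1}$); the isolated-vertex analysis is where the proof actually lives, and as written your argument has no mechanism for it.
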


The rest of this paper is organized as follows: Section 2 provides preliminaries for our notations,  the locally exchanged twisted cubes and diagnosing a system. In Section 3, we determine the $R^g$-vertex-connectivity of locally exchanged twisted cubes. In Section 4, we establish the $g$-good-neighbor conditional diagnosability of locally exchanged twisted cubes under the PMC model and MM$^*$ model, respectively. Our conclusions are given in Section 5.

\section{Preliminaries}

\subsection{Notations}

Let $G=(V(G),E(G))$ be a simple and finite graph. The {\em neighborhood} $N_G(v)$ of a vertex $v$ is the set of vertices adjacent to $v$ and the {\em closed neighborhood} of $v$ is $N_G[v]=N_G(v)\cup \{v\}$. The {\em degree} $d_G(v)$ of $v$ is $|N_G(v)|$.  The {\em minimum  degree} of $G$ is denoted by $\delta(G)$. If $d_G(v)=k$ for any $v\in V(G)$, then $G$ is called a {\em $k$-regular} graph. For $S\subseteq V(G)$, $G[S]$ denotes the subgraph induced by $S$. The {\em neighborhood set} of $S$ is defined as $N_G(S)=(\cup_{v\in S}N_G(v))-S$, and the {\em closed neighborhood set} of $S$ is defined as $N_G[S]=N_G(S)\cup S$. We will use $G-S$ to denote the subgraph $G[V(G) -S]$. For any $v\in V(G)$, $N_S(v)$ denotes the neighborhood of $v$ in $S$. For two disjoint subsets $S,T$ of $V(G)$, let $E_G(S,T)=\{uv\in E(G)~|~u\in S,~v\in T\}$. The {\em symmetric difference} of $F_1\subseteq V(G)$ and $F_2\subseteq V(G)$ is defined as the set $F_1\triangle F_2=(F_1-F_2) \cup (F_2-F_1)$.

The minimum cardinality of a vertex set $S\subseteq V(G)$ such that  $G-S$ is disconnected or has only one vertex, denoted by $\kappa(G)$,
is the {\em connectivity} of $G$. A subset $F\subseteq V(G)$ is called an {\em $R^g$-vertex-set} of $G$ if $\delta(G-F)\geq g$. An {\em $R^g$-vertex-cut} of a connected graph $G$ is a $R^g$-vertex-set $F$ such that $G-F$ is disconnected. The {\em $R^g$-vertex-connectivity} of $G$, denoted by $\kappa^g(G)$, is the cardinality of a minimum $R^g$-vertex-cut of $G$. Note that $\kappa^0(G)=\kappa(G)$.

\subsection{The locally exchanged twisted cubes}

In this subsection, we first give the definition of locally twisted cubes and locally exchanged twisted cubes, respectively, and then present some properties of locally exchanged twisted cubes.

Let ``$\oplus$" represent the modulo 2 addition.
Let $u_{n-1}u_{n-2}\cdots u_1u_0$ be an $n$-bit binary string.  The complement of $u_i$ in $\{0, 1\}$ will be
denoted by $\overline{u_i}$ ($\overline{0}=1$ and $\overline{1}=0$). Yang {\em et al.} \cite{YME} proposed the following non-recursive definition of $LTQ_n$.

\begin{definition}\cite{YME}
\label{LTQ}
Let $n$ be a positive integer. The locally twisted cube $LTQ_n$ of dimension $n$ has $2^n$ vertices, each labeled by
an $n$-bit binary string $u_{n-1}u_{n-2}\cdots u_1u_0$. Any two vertices $u =u_{n-1}u_{n-2}\cdots u_1u_0$ and $v = v_{n-1}v_{n-2}\cdots v_1v_0$ of $LTQ_n$ are adjacent if and only if one of the following conditions is satisfied:

\noindent (1) There is an integer $k$ ($2 \leq k \leq n-1$) such that $u_k = \overline{v_k}$,
 $u_{k-1} = v_{k-1}\oplus u_0$, and all the remaining bits of $u$ and $v$ are identical;

\noindent (2) There is an integer $k$ ($0\leq k \leq 1$) such that $u_k = \overline{v_k}$, and all the remaining bits of $u$ and $v$ are identical.
\end{definition}

As a variant of hypercubes, the locally twisted cube $LTQ_n$ preserves many of its desirable properties such as regularity, Hamiltonicity, strong
connectivity and high recursive constructability. Moreover, $LTQ_n$ also keeps a nice property of $Q_n$, that is, any two adjacent vertices in $LTQ_n$ differ only in at most two successive bits. However, the diameter of $LTQ_n$ is only about half of that of $Q_n$. Furthermore, $LTQ_n$ is superior to $Q_n$ in cycle embedding property as $LTQ_n$ contains cycles of all lengths from 4 to $2^n$ \cite{YME}, but $Q_n$ contains only even cycles since it is a bipartite graph.

Recently, Chang {\em et al.} \cite{CCYW} proposed the definition of locally exchanged twisted cube, which not only kept numerous desirable properties of the locally twisted cube, but also reduced the interconnection complexity.

\begin{definition}\cite{CCYW}
\label{LeTQ}
A locally exchanged twisted cube $LeTQ(s, t)$  with integers $s, t\geq 1$ is defined as an undirected graph. The vertex set $V=\{a_{s-1}\cdots a_1a_0b_{t-1}\cdots b_1b_0c~|~a_i,b_j,c\in \{0,1\}, 0\leq i\leq s-1, 0\leq j\leq t-1\}$. Two vertices $u=a_{s-1}\cdots a_1a_0b_{t-1}\cdots b_1b_0c$ and $v=a'_{s-1}\cdots a'_1a'_0b'_{t-1}\cdots b'_1b'_0c'$ are adjacent if and only if one of the following conditions is satisfied:

\noindent (1) $\overline{c}=c'$, and all the remaining bits of $u$ and $v$ are identical;

\noindent (2) $c=c'=1$, and one of the following conditions is satisfied:

(a) There is an integer $k$ ($0\leq k\leq 1$) such that $\overline{b_k}= b'_k$,  and all the remaining bits of $u$ and $v$ are identical,

(b) $b_0=b'_0=1$, there is an integer $k$ ($2\leq k\leq t-1$) such that $\overline{b_k}= b'_k$ and $\overline{b_{k-1}}= b'_{k-1}$, and all the remaining bits of $u$ and $v$ are identical,

(c) $b_0=b'_0=0$, there is an integer $k$ ($2\leq k\leq t-1$) such that $\overline{b_k}= b'_k$, and all the remaining bits of $u$ and $v$ are identical;

\noindent (3) $c=c'=0$, and one of the following conditions is satisfied:

(a) There is an integer $k$ ($0\leq k\leq 1$) such that $\overline{a_k}= a'_k$  and all the remaining bits of $u$ and $v$ are identical,

(b) $a_0=a'_0=1$, there is an integer $k$ ($2\leq k\leq s-1$) such that $\overline{a_k}= a'_k$ and $\overline{a_{k-1}}= a'_{k-1}$, and all the remaining bits of $u$ and $v$ are identical,

(c) $a_0=a'_0=0$, there is an integer $k$ ($2\leq k\leq s-1$) such that $\overline{a_k}= a'_k$, and all the remaining bits of $u$ and $v$ are identical.
\end{definition}

%%%%%%%%%%%%%%%%%%%%%%%?7?
According to Definition~\ref{LeTQ}, Figure 1 illustrates $LeTQ(1,1)$ and  $LeTQ(1,2)$.

\begin{center}
\setlength{\unitlength}{1.0mm}
\begin{picture}(110,37)
\thinlines
\scriptsize

\textcolor{blue}{\put(15,5){\circle*{1}}}   \put(25,5){\circle*{1}}

\textcolor{blue}{\put(15,15){\circle*{1}}}  \put(25,15){\circle*{1}}

\textcolor{blue}{\put(15,25){\circle*{1}}}  \put(25,25){\circle*{1}}

\textcolor{blue}{\put(15,35){\circle*{1}}}  \put(25,35){\circle*{1}}

\put(15,5){\line(1,0){10}} \put(15,35){\line(1,0){10}}

\put(15,5){\line(0,1){10}} \put(15,25){\line(0,1){10}} \put(25,5){\line(0,1){10}} \put(25,25){\line(0,1){10}}

\put(15,15){\line(10,10){10}} \put(25,15){\line(-10,10){10}}

\put(8,4){110}\put(8,14){010}\put(8,24){100}\put(8,34){000}

\put(27,4){111}\put(27,14){101}\put(27,24){011}\put(27,34){001}

%%%%%%%%%%%%%%%%%%%%%%%

\textcolor{blue}{ \put(56,5){\circle*{1}}} \textcolor{blue}{\put(55,15){\circle*{1}}} \textcolor{blue}{ \put(54,25){\circle*{1}}} \textcolor{blue}{\put(53,35){\circle*{1}}}

\put(63,5){\circle*{1}}\put(63,15){\circle*{1}}\put(63,25){\circle*{1}} \put(63,35){\circle*{1}}

\put(83,5){\circle*{1}}\put(83,15){\circle*{1}}\put(83,25){\circle*{1}} \put(83,35){\circle*{1}}

\textcolor{blue}{ \put(93,5){\circle*{1}}} \textcolor{blue}{ \put(92,15){\circle*{1}}} \textcolor{blue}{ \put(91,25){\circle*{1}}} \textcolor{blue}{\put(90,35){\circle*{1}}}

\put(50,5){\line(1,0){10}}\put(50,35){\line(1,0){10}}  \put(80,5){\line(1,0){10}} \textcolor{red}{\put(60,15){\line(1,0){20}}}

\textcolor{red}{\put(59,5){\line(1,0){20}}}  \textcolor{red}{ \put(58,35){\line(1,0){20}}} \put(78,35){\line(1,0){10}}
\textcolor{red}{\put(58,25){\line(1,0){20}}}

\put(48,5){\line(0,1){10}}\put(58,5){\line(0,1){10}}\put(78,5){\line(0,1){10}}\put(88,5){\line(0,1){10}}

\put(48,25){\line(0,1){10}}\put(58,25){\line(0,1){10}}\put(78,25){\line(0,1){10}}\put(88,25){\line(0,1){10}}

\put(48,15){\line(10,10){10}} \put(58,15){\line(-10,10){10}}\put(78,15){\line(10,10){10}} \put(88,15){\line(-10,10){10}}

\put(40,5){1100}\put(40,15){0100}\put(40,25){1000}\put(40,35){0000}

\put(60,6){1101}\put(60,16){1001}\put(60,26){0101}\put(60,36){0001}

\put(70,6){1111}\put(70,16){1011}\put(70,26){0111}\put(70,36){0011}

\put(90,5){1110}\put(90,15){0110}\put(90,25){1010}\put(90,35){0010}

%\normalsize

 \put(6,-3){$LeTQ(1,1)$}    \put(64,-3){$LeTQ(1,2)$}

\normalsize
\put(-12,-10){Figure 1~~Locally exchanged twisted cubes $LeTQ(1,1)$ and $LeTQ(1,2)$}

\end{picture}
\end{center}

\vskip 1.0cm

%%%%%%%%%%%%%%%%%%%%%%%%%%%%%%%%%%%%%%%%%%%%%%%%%%%%%%%%%%%%%%%%%%%%%%%%%%%%%%%%%%%%%%%%%%%%%%%%%%%%%%%%%%%%%%%%%%%%%%%%%%%%%%%%%%%%%
Let $LeTQ_{x}^i(s, t)$ be the subgraph of $LeTQ(s, t)$ by fixing $x=i$ for $i\in \{0,1\}$ and $x\in\{a_0,a_1,\ldots,s_{s-1}\}\cup\{b_0,b_1,\ldots,b_{s-1}\}$. By the definition of $LeTQ(s, t)$, Chang {\em et al.} \cite{CCYW} proposed the following two propositions.

\begin{proposition} \cite{CCYW}
\label{two-sub}
If $s\geq 2$, $LeTQ(s, t)$ can be decomposed into two subgraphs $LeTQ_{a_i}^0(s, t)$ and $LeTQ_{a_i}^1(s, t)$, which are isomorphic to $LeTQ(s-1, t)$ by fixing $a_i$ for $0\leq i\leq s-1$, and if $t\geq 2$, $LeTQ(s, t)$ can be decomposed into two subgraphs $LeTQ_{b_j}^0(s, t)$ and $LeTQ_{b_j}^1(s, t)$, which are isomorphic to $LeTQ(s, t-1)$ by fixing $b_j$ for $0\leq j\leq t-1$. Furthermore, there are $2^{s+t-1}$ independent edges between $LeTQ_{x}^0(s, t)$ and $LeTQ_{x}^1(s, t)$, $x\in \{a_0,a_1,\ldots,a_{s-1},b_0,b_1,\ldots,b_{t-1}\}$.
\end{proposition}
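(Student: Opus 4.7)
The plan is to verify Proposition~\ref{two-sub} by a direct case analysis on the adjacency rules of Definition~\ref{LeTQ}. I will carry out the argument for $x = a_i$ first and then observe that the case $x = b_j$ follows from the $c=0 / c=1$ symmetry of the definition, with the roles of the $a$-bits and $b$-bits exchanged.

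First, I would establish the vertex partition. Because $V(LeTQ(s,t))$ is identified with $\{0,1\}^{s+t+1}$, fixing $a_i \in \{0,1\}$ cleanly splits the vertex set into two sets of size $2^{s+t}$. I would then construct an explicit bijection $\phi_\ell: V(LeTQ_{a_i}^\ell(s,t)) \to V(LeTQ(s-1,t))$ that deletes the $a_i$-coordinate and relabels the surviving $a$-indices $a_0,\ldots,a_{i-1},a_{i+1},\ldots,a_{s-1}$ as $a_0',\ldots,a_{s-2}'$, while leaving the $b$-coordinates and $c$ untouched.

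Second, I would check that $\phi_\ell$ is an isomorphism. The crucial observation is that Definition~\ref{LeTQ} separates the rules by the value of $c$: rules~(1) and (2a)--(2c) never alter an $a$-bit, so they transfer verbatim to the corresponding rules in $LeTQ(s-1,t)$ under $\phi_\ell$. Rules (3a)--(3c) live entirely in the $c=0$ layer; I would split into the subcase where the rule does not involve $a_i$ (which transfers directly to a rule of the same type in $LeTQ(s-1,t)$) and the subcase where it does (which flips $a_i$ and hence is a cross edge, absent from the induced subgraph). Third, I would count the cross edges, which by construction are precisely the edges flipping $a_i$. Since no rule with $c=1$ moves any $a$-bit, every cross edge has both endpoints in the $c=0$ layer of size $2^{s+t}$; a tally of rule-(3) adjacencies that flip $a_i$, together with the fact that each admissible parameter $k$ uniquely determines the other endpoint, yields the claimed $2^{s+t-1}$ edges, and their independence follows from the fact that each $c=0$ vertex is incident to exactly one such edge.

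The main obstacle is the combinatorial bookkeeping for rule~(3b), whose asymmetric dependence on the value of $a_0$ and whose simultaneous flip of the two bits $a_k$ and $a_{k-1}$ make the behaviour of the parameter $k$ somewhat delicate when $i$ lies in the middle of the $a$-range (i.e.\ $1 \leq i \leq s-2$). One has to set up the relabeling $\phi_\ell$ so that a rule-(3b) adjacency of the original graph with $k = i$ or $k = i+1$ corresponds cleanly to a rule-(3b) (or (3a)) adjacency with some $k'$ in $LeTQ(s-1,t)$, and verify that no adjacency is lost or double-counted; the analogous care is needed for rule~(2b) in the $b_j$-analysis. Once this bookkeeping is laid out, both the isomorphism and the cross-edge count reduce to routine bit manipulation.
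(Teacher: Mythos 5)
Your plan has a genuine gap, and it matters that the paper itself offers no proof to lean on: Proposition~\ref{two-sub} is imported from \cite{CCYW} without argument, so your deletion-and-relabel construction must stand entirely on its own. The step that fails is precisely the one you defer as ``routine bookkeeping'': the claim that a rule-(3) adjacency not involving $a_i$ transfers to a rule of the same type after deleting the $a_i$-coordinate, together with the claim that each $c=0$ vertex lies on exactly one edge flipping $a_i$. Because rule (3b) of Definition~\ref{LeTQ} flips the \emph{consecutive} pair $(a_k,a_{k-1})$ and is switched on by $a_0=1$, for a middle index $1\le i\le s-2$ every vertex with $c=0$ and $a_0=1$ has \emph{two} neighbors differing in $a_i$: its dimension-$i$ neighbor (rule (3a) or (3b)/(3c) with $k=i$) and its rule-(3b) neighbor with $k=i+1$, which flips $a_{i+1}$ and $a_i$. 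Hence the edges between $LeTQ_{a_i}^0(s,t)$ and $LeTQ_{a_i}^1(s,t)$ number $2^{s+t-1}+2^{s+t-2}$ and do not form a matching, contradicting the ``furthermore'' clause you intend to verify. The isomorphism claim fails as well: for $s=3$ and $i=1$, the restriction of a Class-0 cluster $LTQ_3$ to $a_1=\ell$ is a path on four vertices (three edges), whereas every Class-0 cluster of $LeTQ(2,t)$ is the $4$-cycle $LTQ_2$; an edge count shows each half has exactly $2^t$ fewer edges than $LeTQ(2,t)$, so no choice of relabeling $\phi_\ell$ can be an isomorphism. Even for $i=0$, where the cross edges genuinely are a matching of size $2^{s+t-1}$, the two halves' clusters become $Q_{s-1}$ (for $a_0=0$) and a bipartite consecutive-pair-flip graph (for $a_0=1$), neither of which is isomorphic to the non-bipartite $LTQ_{s-1}$ once $s\ge 4$.

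So the bookkeeping for rule (3b) when $i$ lies in the middle of the $a$-range cannot be completed, because the conclusion your plan aims at simply does not hold for those coordinates under Definition~\ref{LeTQ}. Your argument is correct, and essentially immediate, exactly when $x=a_{s-1}$ (and symmetrically $x=b_{t-1}$): there the cross edges are precisely the $k=s-1$ adjacencies of rule (3), one per $c=0$ vertex, and the surviving rule-(3) adjacencies have $2\le k\le s-2$, which are verbatim the rules of $LeTQ(s-1,t)$ on the remaining bits, with rules (1) and (2) untouched; the $b$-side is the mirror image via rule (2). You should therefore either restrict your proof (and the statement you prove) to the leading coordinates $a_{s-1}$, $b_{t-1}$ --- the standard recursive split, which is what your verbatim-transfer argument actually establishes --- or confront the general-$i$ formulation quoted here directly, in which case the examples above show it needs to be re-examined against the source rather than proved by coordinate deletion.
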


\begin{proposition} \cite{CCYW}
\label{cong}
$LeTQ(s, t)\cong LeTQ(t, s)$.
\end{proposition}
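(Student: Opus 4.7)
The plan is to exhibit an explicit isomorphism. Looking at Definition~\ref{LeTQ}, the bit $c$ is a selector: when $c=1$ the edges act on the $b$-bits exactly in the pattern of a (modified) locally twisted cube on $t$ bits, and when $c=0$ they act on the $a$-bits with the analogous pattern on $s$ bits. Since the two halves play structurally symmetric roles, my isomorphism should swap the $a$-block with the $b$-block and flip the selector $c$ so that the old ``$a$-side'' becomes the new ``$b$-side'' of $LeTQ(t,s)$ and vice versa.

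Concretely, I would define $\varphi\colon V(LeTQ(s,t))\to V(LeTQ(t,s))$ by
\[
\varphi(a_{s-1}\cdots a_0\,b_{t-1}\cdots b_0\,c)\;=\;b_{t-1}\cdots b_0\,a_{s-1}\cdots a_0\,\overline{c},
\]
where on the right-hand side the first $t$ symbols are interpreted as the $a'$-block and the next $s$ as the $b'$-block of a $LeTQ(t,s)$-vertex. This is clearly a bijection, so the only thing to verify is edge-preservation.

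Then I would check the three edge types from Definition~\ref{LeTQ} one by one. A type-(1) edge in $LeTQ(s,t)$ (flip $c$ only, keep all other bits) is carried by $\varphi$ to an edge that flips $\overline{c}$ only and keeps all other bits, which is again type~(1) in $LeTQ(t,s)$. A type-(2) edge (with $c=c'=1$ acting on $b$-bits according to subcases (a)/(b)/(c)) lands under $\varphi$ at two vertices whose new selector bit is $0$, with the bit changes now living in the $a'$-block of $LeTQ(t,s)$; the three subcases of type~(2) match verbatim with the three subcases of type~(3). Symmetrically, type-(3) edges map to type-(2) edges. Since the adjacency conditions (including the side-conditions $b_0=b_0'=1$ or $a_0=a_0'=0$ that gate the long-jump edges) are preserved literally under relabeling, $\varphi$ is an isomorphism.

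There is no real obstacle here; the only care needed is bookkeeping, namely making sure that the selector flip $c\mapsto\overline{c}$ is paired with the block swap so that the ``active'' block on each side of the edge ends up in the position the target definition requires, and that subcases (a), (b), (c) of type~(2) correspond bit-for-bit with subcases (a), (b), (c) of type~(3).
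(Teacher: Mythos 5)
Your proposal is correct: the map that swaps the $a$-block with the $b$-block and flips the selector bit $c$ carries type-(1) edges to type-(1) edges and interchanges types (2) and (3) subcase-by-subcase, which is exactly the symmetry built into Definition~\ref{LeTQ}. The paper itself gives no proof of this proposition (it is quoted from Chang et al.~\cite{CCYW}), and your explicit isomorphism is the natural argument one would expect there; the only point worth stating explicitly is that the inverse map has the same form, so non-edges are preserved as well and $\varphi$ is indeed an isomorphism.
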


By Proposition~\ref{cong}, without loss of generality, we always assume that $s\leq t$.

We can partition $V(LeTQ(s,t))$ into  $L$ and $R$, in which

~~~~~~$L=\{a_{s-1}\cdots a_1a_0b_{t-1}\cdots b_1b_00~|~a_i,b_j\in \{0,1\}, 0\leq i\leq s-1, 0\leq j\leq t-1\}$,

~~~~~~$R=\{a_{s-1}\cdots a_1a_0b_{t-1}\cdots b_1b_01~|~a_i,b_j\in \{0,1\}, 0\leq i\leq s-1, 0\leq j\leq t-1\}$.

\noindent  For any $u=a_{s-1}\cdots a_1a_0b_{t-1}\cdots b_1b_0c$, we denote $u=A(u)B(u)C(u)$ for simplicity, where $A(u)=a_{s-1}\cdots a_1a_0$, $B(u)=b_{t-1}\cdots b_1b_0$ and $C(u)=c$.
We can partition $L$ into $L_i$ ($1\leq i\leq 2^t$) such that for two vertices $u=A(u)B(u)0$ and $v=A(v)B(v)0$ of $L_i$,  $B(u)=B(v)$.
Then $|L_i|=2^s$. Similarly, we can partition $R$ into $R_j$ ($1\leq j\leq 2^s$) such that for two vertices $u=A(u)B(u)1$ and $v=A(v)B(v)1$  of $R_j$,  $A(u)=A(v)$. Then $|R_j|=2^t$.

By the definitions of $LTQ_n$ and $LeTQ(s,t)$, we have:

\begin{proposition}
\label{LR1}

\noindent (1) Each subgraph induced by $L_i$ ($1\leq i\leq 2^t$)  is a $LTQ_s$ and we call this subgraph a Class-0 cluster, each subgraph induced by $R_j$ ($1\leq i\leq 2^s$) is a $LTQ_t$ and  we call this subgraph a Class-1 cluster;

\noindent (2) There are no edges between $L_i$ and $L_k$ for $1\leq i,k\leq 2^t$ and $i\neq k$, and there are no edges between $R_j$ and $R_k$ for $1\leq j,k\leq 2^s$ and $j\neq k$.
\end{proposition}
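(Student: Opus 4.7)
The plan is to prove both parts by a direct inspection of Definition~\ref{LeTQ}, since the two claims really just amount to reading off which of the three adjacency rules can apply inside, versus between, the sets in the partition.

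For part (1), I would fix an $L_i$ and observe that every vertex in $L_i$ has $c=0$ and the same $B$-part, say $B=b_{t-1}\cdots b_0$. Of the three conditions in Definition~\ref{LeTQ}, condition (1) requires $\overline{c}=c'$ and condition (2) requires $c=c'=1$, so neither can produce an edge inside $L_i$; only condition (3) is relevant. I would then match condition (3) against Definition~\ref{LTQ} applied to the $s$-bit string $A(u)=a_{s-1}\cdots a_0$. Condition (3a) is exactly Definition~\ref{LTQ}(2). For the higher-index complementations, the point is that the $LTQ_s$ rule $u_{k-1}=v_{k-1}\oplus u_0$ splits into two cases according to $u_0$: if $u_0=0$ (with necessarily $v_0=0$) it becomes $u_{k-1}=v_{k-1}$, which is condition (3c); if $u_0=1$ it becomes $u_{k-1}=\overline{v_{k-1}}$, which is condition (3b). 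Hence $LeTQ(s,t)[L_i]\cong LTQ_s$. The argument for $R_j$ is symmetric: vertices in $R_j$ share $c=1$ and a common $A$-part, only condition (2) can apply, and its three sub-conditions translate to Definition~\ref{LTQ} for the $t$-bit string $B(u)$ in the same way, giving $LeTQ(s,t)[R_j]\cong LTQ_t$.

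For part (2), I would take any $u\in L_i$ and $v\in L_k$ with $i\ne k$; then $c=c'=0$ but $B(u)\ne B(v)$. Condition (1) needs $c\ne c'$, condition (2) needs $c=c'=1$, so only condition (3) could yield an edge, but every sub-case of (3) fixes all bits outside the $a$-coordinates — in particular it requires $B(u)=B(v)$, contradicting $i\ne k$. Hence $E_{LeTQ(s,t)}(L_i,L_k)=\emptyset$. For the $R_j$ side, with $c=c'=1$ and $A(u)\ne A(v)$, only condition (2) is available and it similarly forces $A(u)=A(v)$, a contradiction.

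I do not expect any real obstacle here; the only thing that needs care is the book-keeping in matching condition (3b)/(3c) to the single $LTQ_s$ rule $u_{k-1}=v_{k-1}\oplus u_0$ via the case split on $u_0$, and likewise for condition (2b)/(2c) and $LTQ_t$. Once that correspondence is spelled out, both statements follow by inspection of Definition~\ref{LeTQ}.
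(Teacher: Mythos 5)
Your proposal is correct and matches the paper's treatment: the paper states Proposition~\ref{LR1} as an immediate consequence of Definitions~\ref{LTQ} and~\ref{LeTQ} without writing out a proof, and your argument simply makes that routine verification explicit. In particular, your case split on $u_0$ translating the $LTQ$ rule $u_{k-1}=v_{k-1}\oplus u_0$ into conditions (3b)/(3c) (and (2b)/(2c) for the $B$-part), together with the observation that conditions of type (2)/(3) force the untouched half of the string to agree, is exactly the intended reading of the definitions.
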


Note that for any $u=a_{s-1}\cdots a_1a_0b_{t-1}\cdots b_1b_0c$ in $L$ (or resp., $R$), $u$ has a unique neighbor $v=a_{s-1}\cdots a_1a_0b_{t-1}\cdots b_1b_0\overline{c}$ in $R$ (or resp., $L$). In this case, we call $uv$ a {\it cross edge} and denote $v=u^{*}$. By Proposition~\ref{LR1}, we have the following proposition.

\begin{proposition}
\label{degree}
Each vertex in $L$ has degree $s+1$, and each vertex in $R$ has degree $t+1$.
\end{proposition}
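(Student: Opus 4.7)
The plan is to directly enumerate, for an arbitrary vertex $u = a_{s-1}\cdots a_1 a_0 b_{t-1}\cdots b_1 b_0 c$, all neighbors produced by the three adjacency clauses of Definition~\ref{LeTQ}, and split into the two cases $c=0$ (so $u\in L$) and $c=1$ (so $u\in R$). The two cases are completely symmetric: they swap the roles of the $a$-bits (indexed by $s$) and the $b$-bits (indexed by $t$), with clause (3) replaced by clause (2).

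First consider $c=0$. Clause (1) produces exactly one neighbor, namely the cross-edge partner $u^* = A(u)B(u)1$. Clause (2) is vacuous since it requires $c=1$. The key claim is that clause (3) produces exactly $s$ additional neighbors, one for each bit position $k \in \{0,1,\ldots,s-1\}$: for $k \in \{0,1\}$ use (3)(a), which flips $a_k$ alone; for $k \geq 2$, exactly one of (3)(b) or (3)(c) applies depending on whether $a_0=1$ or $a_0=0$, producing a neighbor that flips either $\{a_k, a_{k-1}\}$ or $\{a_k\}$. These $s$ neighbors differ from $u$ in pairwise distinct subsets of the $a$-bits (and agree with $u$ on $B(u)$ and on $c$), so they are mutually distinct and also distinct from $u^*$. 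Hence $d(u) = 1 + s = s+1$. The case $c=1$ is handled identically with $(a,s,\text{clause (3)})$ replaced by $(b,t,\text{clause (2)})$, yielding $d(u)=t+1$.

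The only point requiring care is the small-parameter bookkeeping when $s=1$ (or symmetrically $t=1$): clause (3)(a) then has only $k=0$ available and clauses (3)(b),(3)(c) are vacuous, giving just $1$ neighbor from clause (3) plus the cross edge, for a total of $2 = s+1$; this matches the $8$-cycle $LeTQ(1,1)$ shown in Figure~1. I expect no genuine obstacle here, since the proposition is essentially a counting exercise; the main task is merely to verify that the three sub-clauses of (3) (respectively (2)) partition the non-cross edges incident to $u$, which is immediate from the bit patterns they prescribe.
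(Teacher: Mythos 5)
Your enumeration is correct: clause (1) of Definition~\ref{LeTQ} gives the unique cross neighbor, clause (2) is vacuous for $c=0$, and for each $k\in\{0,\dots,s-1\}$ exactly one of (3)(a), (3)(b), (3)(c) produces one neighbor, these $s$ neighbors being pairwise distinct because their flipped bit-sets ($\{a_0\}$, $\{a_1\}$, and either $\{a_k\}$ or $\{a_k,a_{k-1}\}$ for $k\ge 2$) are pairwise distinct; the symmetric argument handles $c=1$, and your remark about $s=1$ (only $k=0$ survives in (3)(a)) is the right bookkeeping. This is, however, a different route from the paper's. The paper does not recount edges from the definition: it deduces the proposition from Proposition~\ref{LR1} together with the cross-edge observation --- each vertex of $L$ lies in a Class-0 cluster inducing a copy of $LTQ_s$, which is $s$-regular, has no other neighbors inside $L$ since distinct clusters of the same class are non-adjacent, and has exactly one neighbor in $R$, giving $s+1$ (and dually $t+1$ for $R$). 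The paper's derivation is shorter because the structural facts are already packaged in Proposition~\ref{LR1} and in the known regularity of locally twisted cubes; your bit-level argument is more self-contained, effectively re-proving at the level of Definition~\ref{LeTQ} the very regularity and cluster-separation facts the paper cites, which is harmless but duplicates work the paper has already set up.
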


Let $u=A(u)B(u)0$ and $w=A(w)B(w)0$ be two vertices of a Class-0 cluster, then $A(u)\neq A(w)$ and $B(u)=B(w)$. Note that $u^{*}=A(u)B(u)1$ and $w^{*}=A(w)B(w)1$, then $u^*$ and $w^*$ belong to two different Class-1 clusters. Similarly, if $u$ and $w$ are two vertices of a Class-1 cluster, then $u^{*}$ and $w^{*}$ belong to two different Class-0 clusters. So we have the following proposition.

\begin{proposition}
\label{LR2}
Each vertex in $L$ has a unique neighbor in $R$ and vice visa. Furthermore, for two different vertices $u,v$ of a Class-$c$ cluster, $u^{*}$ and $v^{*}$ belong to two different Class-$\overline{c}$ clusters, where $c\in \{0,1\}$.
\end{proposition}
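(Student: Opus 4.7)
The plan is to read the statement off directly from Definition~\ref{LeTQ} together with the partitions of $L$ and $R$ introduced just above the proposition; no nontrivial combinatorics is needed. First, I would establish the uniqueness of the neighbor across the $L$/$R$ cut. By inspection, condition~(2) and condition~(3) of Definition~\ref{LeTQ} both require $c=c'$, so they contribute no edges between $L$ (vertices with $c=0$) and $R$ (vertices with $c=1$). Only condition~(1) can produce such an edge, and it forces $u$ and $v$ to agree in every coordinate except the last. Hence for $u=A(u)B(u)0\in L$ the unique neighbor in $R$ is $u^{*}=A(u)B(u)1$, and symmetrically for vertices of $R$.

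For the second part I would split on $c$. Assume $u,v$ are distinct vertices of a Class-$0$ cluster $L_i$. The partition $L=L_1\cup\cdots\cup L_{2^t}$ was defined by grouping vertices whose $B$-part coincides, so $B(u)=B(v)$, and since $u\ne v$ this forces $A(u)\ne A(v)$. Passing to the cross edges gives $u^{*}=A(u)B(u)1$ and $v^{*}=A(v)B(v)1$, both in $R$. The partition $R=R_1\cup\cdots\cup R_{2^s}$ was defined by grouping vertices whose $A$-part coincides, and $A(u^{*})=A(u)\ne A(v)=A(v^{*})$, so $u^{*}$ and $v^{*}$ lie in distinct Class-$1$ clusters. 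The case $c=1$ is handled by the same argument with the roles of $A$ and $B$ (equivalently, $L$ and $R$) interchanged, using Proposition~\ref{cong} if one prefers a symmetry shortcut.

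There is essentially no obstacle: the argument is purely bookkeeping, and the only thing to be careful about is explicitly invoking the fact that conditions~(2a)--(2c) and (3a)--(3c) of Definition~\ref{LeTQ} never produce a cross edge. Once that is noted, both assertions of the proposition reduce to comparing the defining coordinates of the partitions of $L$ and $R$.
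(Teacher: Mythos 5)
Your proposal is correct and follows essentially the same route as the paper: the unique cross neighbor comes from the observation that only condition~(1) of Definition~\ref{LeTQ} permits $c\neq c'$, and the second assertion follows by comparing the $A$- and $B$-parts of $u^{*},v^{*}$ with the defining coordinates of the partitions $\{L_i\}$ and $\{R_j\}$, exactly as in the paragraph preceding the proposition. Nothing further is needed.
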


\begin{proposition}
\label{common neighbor}
$LeTQ(s,t)$ contains no triangles. Furthermore, $LeTQ(s,t)$ contains no $K_{2,3}$. That is, for any  $u,v\in V(LeTQ(s,t))$, $u$ and $v$ have at most two common neighbors.
\end{proposition}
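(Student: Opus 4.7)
The plan is to analyze common neighbors of any two distinct vertices $u,v$ by case analysis on which side ($L$ or $R$) each lies in, exploiting the decomposition of $LeTQ(s,t)$ into Class-$0$ clusters (each isomorphic to $LTQ_s$), Class-$1$ clusters (each isomorphic to $LTQ_t$), and the set of cross edges, which form a perfect matching between $L$ and $R$ by Proposition~\ref{LR2}. The whole argument reduces to two facts: (i) cross edges form a matching, so each vertex has exactly one neighbor on the opposite side; and (ii) the locally twisted cube $LTQ_n$ is itself triangle-free and contains no $K_{2,3}$, which can be read off Definition~\ref{LTQ} by noting that any two adjacent vertices of $LTQ_n$ differ in either the bit $k\in\{0,1\}$ or in the two consecutive bits $k,k-1$ for some $2\le k\le n-1$, and then doing a short case analysis on which pair(s) of bits change.

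For triangle-freeness of $LeTQ(s,t)$, I would argue that a triangle cannot lie inside a single Class-$c$ cluster (since each cluster is triangle-free by (ii)), and it cannot contain exactly one edge from a cluster together with two cross edges, because the two cross edges would share an endpoint on the opposite side, violating (i) as the cross-edge map $u\mapsto u^*$ is injective. Hence no triangle exists.

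For the $K_{2,3}$-freeness, I split into three cases. If $u,v$ both lie in the same cluster, then their common neighbors inside that cluster number at most two by (ii); they can have no common neighbor in the opposite side since $u^*\neq v^*$ by (i), and no common neighbor in another cluster on the same side since there are no edges between distinct clusters on the same side (Proposition~\ref{LR1}(2)). If $u,v$ lie in different clusters on the same side, they trivially share no neighbor on their side, and a shared neighbor on the opposite side would force $u^*=v^*$, contradicting (i). If $u\in L$ and $v\in R$, any common neighbor $w\in L$ must satisfy $w^*=v$, which forces $w=A(v)B(v)0$ (so $B(u)=B(v)$ is also needed for $w$ to lie in $u$'s cluster)—at most one such $w$; symmetrically there is at most one common neighbor in $R$. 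Thus in every case $u$ and $v$ share at most two neighbors.

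The main obstacle, and the step where care is really needed, is verifying that $LTQ_n$ has no triangle and no $K_{2,3}$; this is where the three subtypes (2a), (2b), (2c) (and (3a)–(3c)) of Definition~\ref{LeTQ} interact subtly. Concretely, when two vertices are adjacent via the twisted rule affecting bits $k$ and $k-1$, the value of the lowest bit $b_0$ (or $a_0$) constrains which further neighbors are possible, and one must enumerate which small bit-change patterns can close a triangle or a $K_{2,3}$ and rule each out. Once this local lemma is in hand, the global argument above assembles quickly.
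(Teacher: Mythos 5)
Your global reduction is sound: using Proposition~\ref{LR1} (each cluster is a locally twisted cube, and there are no edges between distinct clusters on the same side) together with Proposition~\ref{LR2} (the cross edges form a perfect matching, so each vertex has exactly one neighbor on the opposite side), your case analysis correctly shows that any triangle, and any pair of vertices with three common neighbors, would have to lie entirely inside a single Class-$0$ or Class-$1$ cluster. But this reduces the whole proposition to the claim that $LTQ_n$ itself is triangle-free and $K_{2,3}$-free, and that claim is exactly what you never prove: you flag it yourself as the main obstacle and only say that one ``must enumerate'' the bit-change patterns of Definition~\ref{LTQ}. That enumeration is the real content of your route and it is not trivial --- because the neighbor of $u$ in dimension $k\ge 2$ twists bit $k-1$ depending on $u_0$, checking that two vertices differing in two or three consecutive bits never acquire a third common neighbor requires several subcases --- nor is it a fact stated anywhere in this paper that you could cite. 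As written, the proof is therefore incomplete at its crux.

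The paper avoids any analysis of $LTQ_n$ altogether: it argues by induction on $s+t$, using Proposition~\ref{two-sub} to split $LeTQ(s,t)$ into two copies of $LeTQ(s,t-1)$ joined by $2^{s+t-1}$ independent edges; the base case $LeTQ(1,1)$ is an $8$-cycle, and the inductive step is precisely your ``at most one neighbor across the matching'' observation applied to the two halves. You can close your gap in either of two ways: prove the missing lemma about $LTQ_n$ by the same induction (split $LTQ_n$ into two copies of $LTQ_{n-1}$ joined by a perfect matching, base case $LTQ_2=C_4$), or drop the cluster case analysis and run the induction on $LeTQ(s,t)$ directly as the paper does. Either repair works, but note that both amount to the same matching-plus-induction idea, so once completed your argument largely converges with the paper's.
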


\begin{proof}
We proof this proposition by induction on $s+t$. In the basis step, for $s=t=1$, $LeTQ(1,1)$ is a 8-cycle. It is seen that $LeTQ(1,1)$ contains neither triangles nor $K_{2,3}$. In the induction step, assume the statement is true for $s+t=k-1$ with $k\geq 3$. Then we consider the case of $s+t=k$. Since $s\le t$, $t\ge 2$. By Proposition~\ref{two-sub}, $LeTQ(s, t)$ can be decomposed into two subgraphs  $LeTQ^0(s, t)$ and $LeTQ^1(s, t)$ which are isomorphic to $LeTQ(s, t-1)$. By induction hypothesis, $LeTQ^i(s, t)$ contains neither triangles nor $K_{2,3}$ for $i=0,1$. Since there are exactly $2^{s+t-1}$ independent edges between $LeTQ^0(s, t)$ and $LeTQ^1(s, t)$, $LeTQ(s,t)$ contains no triangles, and for $u,v\in V(LeTQ^0(s,t))$ or $u,v\in V(LeTQ^1(s,t))$, $u$ and $v$ have at most two common neighbors. Note that for $u\in V(LeTQ^0(s,t))$ and $v\in V(LeTQ^1(s,t))$, $u$ has at most one neighbor in $V(LeTQ^1(s,t))$ and $v$ has at most one neighbor in $V(LeTQ^0(s,t))$, and hence $u$ and $v$ have at most two common neighbors.
\end{proof}

\begin{proposition}
\label{LeTQ-neighbor}
Let $H$ be a subgraph of $LeTQ(s,t)$. If $\delta(H)\geq g$, then $|V(H)| \geq 2^g$.
\end{proposition}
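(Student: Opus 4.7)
The plan is to prove this by induction on $s+t$, exploiting the recursive decomposition from Proposition~\ref{two-sub}. Since the only structural fact we need is that $LeTQ(s,t)$ splits into two copies of a smaller locally exchanged twisted cube joined by a perfect matching, the argument should mirror the classical proof for hypercubes.

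For the base case I would take $s=t=1$, where $LeTQ(1,1)$ is an $8$-cycle. A nonempty subgraph $H$ with $\delta(H)\ge g$ automatically satisfies $|V(H)|\ge g+1$, which exceeds $2^g$ for $g\le 2$; and $\delta(H)\ge 3$ is vacuous in a cubic-free graph of maximum degree $2$. So the conclusion holds trivially when $s+t=2$.

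For the inductive step I would assume the lemma for every $LeTQ(s',t')$ with $s'+t' < s+t$ and consider the case $s+t\ge 3$. Since $s\le t$ we then have $t\ge 2$, so by Proposition~\ref{two-sub} I can write $LeTQ(s,t)=LeTQ_{b_{t-1}}^0(s,t)\cup LeTQ_{b_{t-1}}^1(s,t)$, where each half is isomorphic to $LeTQ(s,t-1)$ and the $2^{s+t-1}$ edges between them form a matching. Letting $H_i=H\cap LeTQ_{b_{t-1}}^i(s,t)$, the matching property implies that every vertex of $H_0$ has at most one neighbor in $H_1$, and vice versa. Hence if both $H_0$ and $H_1$ are nonempty, then $\delta(H_i)\ge g-1$ for $i=0,1$, and the inductive hypothesis applied to each half yields $|V(H_i)|\ge 2^{g-1}$, so $|V(H)|\ge 2^g$. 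If instead one of them, say $H_1$, is empty, then $H=H_0$ with $\delta(H_0)\ge g$, and the inductive hypothesis inside $LeTQ(s,t-1)$ gives $|V(H)|\ge 2^g$ directly.

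I do not expect any genuine obstacle. The only point that needs care is the $g=0$ case, where the statement $|V(H)|\ge 1$ implicitly assumes $H$ is nonempty; I would state this assumption explicitly at the beginning. The key structural input is simply that the inter-cube edges in the decomposition form an independent set, which is exactly the matching guaranteed by Proposition~\ref{two-sub}.
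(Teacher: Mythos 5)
Your proof follows essentially the same route as the paper: induction on $s+t$, base case the $8$-cycle $LeTQ(1,1)$, and in the inductive step the decomposition of Proposition~\ref{two-sub} into two copies of a smaller cube joined by a matching, so that each vertex loses at most one neighbor when restricted to its half, giving $\delta(H_i)\ge g-1$ and $|V(H)|\ge 2^{g-1}+2^{g-1}=2^g$ when both halves meet $H$, and a direct application of the hypothesis otherwise. Your explicit remark about nonemptiness at $g=0$ is a reasonable clarification that the paper leaves implicit.

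The one flaw is in your base case at $g=2$: the bound $|V(H)|\ge g+1$ gives only $3$, which does not exceed $2^2=4$, so the sentence ``which exceeds $2^g$ for $g\le 2$'' is false at $g=2$. The claim itself is still true, and the paper proves it by a different observation: a subgraph of the $8$-cycle with minimum degree $2$ must be the entire cycle, so $|V(H)|=8>2^2$. Since the inductive step invokes the full statement $|V(H_i)|\ge 2^{g-1}$ (and the base case $g=2$ is genuinely needed, e.g.\ when $\delta(H)\ge 3$ in $LeTQ(1,2)$ and both halves are nonempty), you should replace the $g+1$ shortcut at $g=2$ by this cycle argument; with that repair the proof is complete and matches the paper's.
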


\begin{proof}
We proof this proposition by induction on $s+t$. In the basis step, for $s=t=1$, $LeTQ(1,1)$ is a 8-cycle. Since $\delta(H)\geq g$, $0\leq g\leq 2$. If $0\leq g\leq 1$, then $K_{g+1}$ is a subgraph of $H$, and thus $|V(H)| \geq 2^g$. If $g=2$, then $H=LeTQ(1,1)$, and thus $|V(H)|=8>2^2$. Hence, the basic step holds. In the induction step, assume the statement is true for $s+t=k-1$ with $k\geq 3$. Then we consider the case of $s+t=k$. Since $s\le t$, $t\ge 2$. Let $H$ be a subgraph of $LeTQ(s,t)$ with $\delta(H)\geq g$.
By Proposition~\ref{two-sub}, $LeTQ(s, t)$ can be decomposed into two subgraphs  $LeTQ^0(s, t)$ and $LeTQ^1(s, t)$ which are isomorphic to $LeTQ(s, t-1)$.
Let $H_0$ be the subgraph induced by $V(H)\cap V(LeTQ^0(s, t))$ and $H_1$ be the subgraph induced by $V(H)\cap V(LeTQ^1(s, t))$.
%We consider the following two cases.

\vskip 0.2cm
\noindent{\bf Case 1.} $|V(H_0)|=0$ or $|V(H_1)|=0$.
\vskip 0.2cm

Without loss of generality, assume that $|V(H_0)|=0$. That is, $H=H_1$ is a subgraph of $LeTQ^1(s, t)$. Note that $\delta(H)\geq g$, then $|V(H)| \geq 2^g$ by induction hypothesis.

\vskip 0.2cm
\noindent{\bf Case 2.} $|V(H_0)|>0$ and $|V(H_1)|>0$.
\vskip 0.2cm

Note that $\delta(H)\geq g$ and each vertex in $LeTQ^i(s, t)$ has at most one neighbor in $LeTQ^{\overline{i}}(s, t)$, then $\delta(H_i)\geq g-1$, and thus $|V(H_i)|\geq 2^{g-1}$ by induction hypothesis for $i=0,1$. Hence $|V(H)|=|V(H_0)|+|V(H_1)|\geq 2^{g-1}+2^{g-1}=2^g$.
\end{proof}

\subsection{The PMC model and MM$^*$ model for diagnosis}

A multiprocessor system is typically represented by an undirected simple graph $G=(V, E)$, where $V(G)$ stands for the processors and $E(G)$ represents the link between two processors.

Preparata {\em et al.} \cite{Preparata} proposed the PMC model, which performs diagnosis by testing the neighboring processor via the links between them. Under the PMC model, tests can be performed between any two adjacent vertices $u$ and $v$. We use the ordered pair $(u, v)$  to denote a test that $u$ diagnoses $v$.  The result of a test $(u, v)$ is reliable if and only if $u$ is fault-free. In this condition, the result is 0 if $v$ is fault-free, and is 1 otherwise.

A test assignment for a system $G$ is a collection of tests, which can be represented a directed graph $T = (V(G), L)$, where $V(G)$ is the vertex set of $G$ and $L=\{(u,v)~|~u,v\in V(G)$ and $uv\in E(G)\}$.  The collection of all test results from the test scheme $T$ is termed as a syndrome $\sigma: L\rightarrow \{0, 1\}$. Let $T = (V (G), L)$ be a test assignment, and $F$ a subset of $V (G)$. For any given syndrome $\sigma$ resulting from $T$, $F$ is said to be {\em consistent} with $\sigma$ if the syndrome $\sigma$ can be produced when all vertices in $F$ are faulty and all vertices in $V(G)- F$ are  fault-free. That is, if $u$ is fault-free, then $\sigma((u,v))=0$ if $v$ is fault-free, and $\sigma((u,v))=1$ if $v$ is faulty; if $u$ is faulty,
 then $\sigma((u,v))$ can be either 0 or 1 no matter $v$ is faulty or not. Therefore, a faulty set $F$ may be consistent with different syndromes.
 We use $\sigma(F)$ to represent the set of all possible syndromes with which the faulty set $F$ can be consistent.

Let $F_1$ and $F_2$ be two distinct faulty sets of $V (G)$, $F_1$ and $F_2$ are {\em distinguishable} if $\sigma(F_1)\cap \sigma(F_2) = \emptyset$; otherwise, $F_1$ and $F_2$ are {\em indistinguishable}. In other words, if $\sigma(F_1)\cap \sigma(F_2) = \emptyset$, then $(F_1, F_2)$ is a {\em distinguishable pair}; otherwise, $(F_1, F_2)$ is an {\em indistinguishable pair}. A system $G$ is called {\em $t$-diagnosable}, if any two distinct faulty sets $F_1,F_2\subseteq V(G)$ are distinguishable, provided that $|F_1|,|F_2|\leq t$.
% A system $G$ is called {\em $t$-diagnosable} if for any two distinct faulty sets $F_1,F_2\subseteq V(G)$ with $|F_1|,|F_2|\leq t$ are distinguishable.
The {\em diagnosability} of $G$, denoted as $t(G)$, is the maximum $t$ such that $G$ is $t$-diagnosable.

Dahbura and  Masson \cite{Dahbura} proposed a sufficient and necessary condition of $t$-diagnosable systems.

\begin{proposition} \cite{Dahbura}
\label{diagnosable}
A system $G =(V, E)$ is $t$-diagnosable if and only if, for any two distinct subsets $F_1$ and $F_2$ of $V(G)$ with $|F_1|,|F_2|\leq t$, there is at least one test from $V(G)-F_1-F_2$ to $F_1\triangle F_2$.
\end{proposition}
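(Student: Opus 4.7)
The plan is to prove both directions by contraposition, trading the abstract statement of $t$-diagnosability for explicit syndrome constructions. For the necessity direction, I would suppose that there exist distinct candidate faulty sets $F_1, F_2 \subseteq V(G)$ with $|F_1|,|F_2|\leq t$ admitting no test edge from $V(G)-F_1-F_2$ to $F_1\triangle F_2$, and then exhibit a single syndrome $\sigma$ lying in $\sigma(F_1)\cap \sigma(F_2)$; this indistinguishability would contradict $t$-diagnosability.

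To define such a $\sigma$, I would case-split on the location of the tester $u$ in each test $(u,v)$. When $u\notin F_1\cup F_2$, the tester is fault-free in both scenarios, and by hypothesis $v\notin F_1\triangle F_2$, so $v$ has identical fault status under $F_1$ and under $F_2$; I set $\sigma(u,v)$ to the common forced value. When $u\in F_1\cap F_2$ the tester is faulty in both scenarios and any output is admissible, so I set $\sigma(u,v)=0$. When $u\in F_1\setminus F_2$, the tester is faulty in the $F_1$-world and fault-free in the $F_2$-world, so I set $\sigma(u,v)=1$ if $v\in F_2$ and $\sigma(u,v)=0$ otherwise; the symmetric case $u\in F_2\setminus F_1$ is handled dually. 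A routine check against the definition of consistency then confirms that this single $\sigma$ belongs to both $\sigma(F_1)$ and $\sigma(F_2)$.

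The sufficiency direction I would handle by direct contradiction: assume $F_1,F_2$ are distinct faulty sets of size at most $t$ that share a syndrome $\sigma$. The hypothesized test edge $(u,v)$ with $u\in V(G)-F_1-F_2$ and $v\in F_1\triangle F_2$, say $v\in F_1\setminus F_2$, forces $\sigma(u,v)=1$ by $F_1$-consistency (since $u$ is fault-free and $v\in F_1$) and $\sigma(u,v)=0$ by $F_2$-consistency (since $u$ is fault-free and $v\notin F_2$), which is impossible.

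The main obstacle is in the necessity direction: one has to pinpoint precisely where the structural hypothesis is used, namely at those tests whose tester lies outside $F_1\cup F_2$ (elsewhere the tester is faulty in at least one scenario and the outcome is unconstrained there), and then verify that the constructed $\sigma$ simultaneously obeys the PMC consistency rules for \emph{both} faulty sets. Once one sees that the non-existence of crossing tests is exactly what makes the definition of $\sigma$ on fault-free testers unambiguous, the rest is bookkeeping.
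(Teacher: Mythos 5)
Your argument is correct, and there is nothing in the paper to compare it with: the proposition is quoted from Dahbura and Masson \cite{Dahbura} without proof, and your syndrome construction is exactly the standard argument for this characterization — the absence of tests from $V(G)-F_1-F_2$ into $F_1\triangle F_2$ makes the forced outcomes at fault-free testers identical under $F_1$ and $F_2$ (and faulty testers are unconstrained), yielding a common syndrome, while the converse is the immediate $0/1$ clash you describe. Since the paper defines $t$-diagnosability directly as pairwise distinguishability of all faulty sets of size at most $t$, no additional hypotheses (such as bounds on $|V|$) are needed, so your proof is complete as stated.
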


For the PMC model, Peng {\em et al.} \cite{Peng} proposed  the following proposition.

\begin{proposition} \cite{Peng}
\label{PMC}
For any two distinct subsets $F_1$ and $F_2$ of $V(G)$, $(F_1, F_2)$ is a distinguishable pair under the PMC model if and only if there is a vertex $u\in V(G)-F_1-F_2$ and a vertex $v\in F_1\triangle F_2$ such that $(u, v) \in E(G)$ (see Figure 2).
\end{proposition}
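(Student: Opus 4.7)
The plan is to establish both directions of the equivalence by either producing or obstructing a common syndrome.

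For the sufficiency direction I would fix a vertex $u \in V(G) - F_1 - F_2$ adjacent to some $v \in F_1 \triangle F_2$; without loss of generality assume $v \in F_1 \setminus F_2$. Since $u$ lies outside $F_1 \cup F_2$, it is fault-free under both hypothesized faulty sets, so the PMC test $(u,v)$ is reliable in both scenarios. Under $F_1$ the test must return $1$ because $v$ is faulty, while under $F_2$ it must return $0$ because $v$ is fault-free. No single syndrome can assign both values to the same test, hence $\sigma(F_1) \cap \sigma(F_2) = \emptyset$ and $(F_1, F_2)$ is distinguishable.

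For the necessity I would argue by contrapositive: assuming there is no edge between $V(G) - F_1 - F_2$ and $F_1 \triangle F_2$, I would construct a single syndrome $\sigma$ lying in $\sigma(F_1) \cap \sigma(F_2)$. I classify each test $(u,v)$ by which of the four cells $V(G) - F_1 - F_2$, $F_1 \cap F_2$, $F_1 \setminus F_2$, $F_2 \setminus F_1$ contains $u$. When $u \in F_1 \cap F_2$, $u$ is faulty in both scenarios and the test outcome is free, so I set it to $0$. When $u \in F_1 \setminus F_2$, the outcome is free under $F_1$ but forced under $F_2$ (namely $1$ if $v \in F_2$ and $0$ otherwise), so I assign that forced value; the case $u \in F_2 \setminus F_1$ is handled symmetrically.

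The delicate cell is $u \in V(G) - F_1 - F_2$, where $u$ is fault-free under both scenarios and the outcome of $(u,v)$ is pinned down by \emph{both} $F_1$ and $F_2$ simultaneously; this is the main obstacle, and it is precisely where the no-edge hypothesis is needed. Since $v \notin F_1 \triangle F_2$, either $v \in F_1 \cap F_2$, in which case both scenarios force the outcome to $1$, or $v \in V(G) - F_1 - F_2$, in which case both scenarios force the outcome to $0$. In either situation the two forced values agree, so $\sigma((u,v))$ can be set consistently. Assembling the assignments from all four cells yields a syndrome compatible with both $F_1$ and $F_2$, so they are indistinguishable and the contrapositive is complete.
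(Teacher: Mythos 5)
Your proof is correct. The paper does not prove this statement at all --- it is quoted from Peng et al.\ (Proposition~\ref{PMC}) as a known characterization --- and your argument is the standard one for it: the sufficiency direction forces conflicting values of the reliable test $(u,v)$ under the two fault scenarios, and the necessity direction, taken contrapositively, builds a common syndrome by a case analysis on the location of the tester, with the no-edge hypothesis used exactly where you say it is, namely to make the doubly-forced tests issued from $V(G)-F_1-F_2$ agree. The four-cell case analysis covers every test in the assignment $L$, so the constructed syndrome lies in $\sigma(F_1)\cap\sigma(F_2)$ and the equivalence is complete; no gaps.
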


\begin{figure}[!htb]
\centering
{\includegraphics[height=0.15\textwidth]{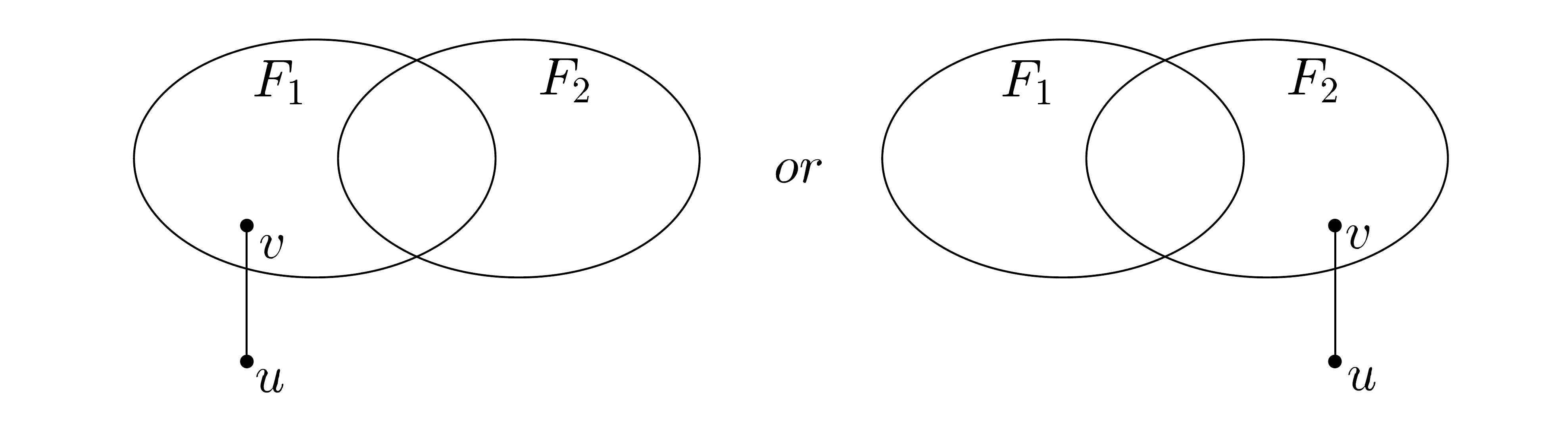}}

Figure 2 Illustration of a distinguishable pair under the PMC model
\end{figure}

Sengupta and Dahbura \cite{Sengupta} proposed the MM$^*$ model, which performs diagnosis by sending the same inputs to a pair of
adjacent processors, and comparing their responses. In the MM$^*$ model, tests can be performed from any vertex $w$ to its any two neighbors $u$ and $v$. We use the labeled pair $(u, v)_w$ to denote a test performed by $w$ on its neighbors $u$ and $v$. The result of a test $(u, v)_w$ is reliable if and only if $w$ is fault-free. In this condition, the result is 0 if both $u$ and $v$ are fault-free, and is 1 otherwise.

The test scheme of a system $G$ is often represented by a multigraph $M=(V(G), L)$, where $L =\{(u, v)_w~|~u, v, w \in V(G)$ and $uw, vw \in E(G)\}$. Since a pair of vertices may be compared by different vertices, $M$ is a multigraph. The collection of all test results from the test scheme $M$ is termed as a syndrome $\sigma: L\rightarrow \{0, 1\}$. Given a subset of vertices $F\subseteq V(G)$, we say that $F$ is consistent with $\sigma$ if the syndrome $\sigma$ can be produced when all nodes in $F$ are faulty and all nodes in $V(G)-F$ are fault-free. That is, if $w$ is fault-free, then $\sigma((u, v)_w) =0$ if $u$ and $v$ are fault-free, and  $\sigma((u, v)_w) =1$  otherwise; if $w$ is faulty, then  $\sigma((u, v)_w)$ can be either 0 or 1 no matter $u$ and $v$ are faulty or not.

Sengupta and Dahbura \cite{Sengupta} proposed a sufficient and necessary condition for two distinct subsets  $F_1$ and $F_2$ to be a distinguishable
pair under the MM$^*$ model.

\begin{proposition}\cite{Sengupta}
\label{MM}
For any two distinct sets $F_1,F_2\subseteq V(G)$, $(F_1, F_2)$ is a distinguishable pair under the MM$^*$ model if and only if one of the following conditions is satisfied (see Figure 3):

\noindent(1) There are two vertices $u, w\in V(G)-F_1-F_2$ and there is a vertex $v \in F_1\triangle F_2$ such that $(u, w)$, $(v, w)\in E(G)$;

\noindent(2) There are two vertices $u, v\in F_1-F_2$ and there is a vertex $w \in V(G)-F_1-F_2$ such that $(u, w)$,  $(v, w)\in E(G)$;

\noindent(3) There are two vertices $u, v \in F_2-F_1$ and there is a vertex $w\in V(G)-F_1-F_2$ such that  $(u, w)$, $(v, w)\in E(G)$.
\end{proposition}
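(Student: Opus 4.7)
The plan is a direct case analysis driven by the structural observation that in the MM$^*$ model a test $(u,v)_w$ has its outcome \emph{forced} only when the comparator $w$ is fault-free; when $w$ is faulty the bit may be $0$ or $1$ regardless of the statuses of $u$ and $v$. Consequently, the only tests that can witness distinguishability of $F_1$ and $F_2$ are those with $w \in V(G)\setminus(F_1\cup F_2)$, and for such a test the forced syndrome bit equals $0$ iff both $u,v$ are fault-free in the corresponding scenario and $1$ otherwise. Throughout I would partition $V(G)$ into the four regions $F_1\cap F_2$, $F_1\setminus F_2$, $F_2\setminus F_1$, and $V(G)\setminus(F_1\cup F_2)$, and analyze tests according to which regions $u,v,w$ lie in.

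For the ``if'' direction I would treat each condition separately. Under (1) the test $(u,v)_w$ has $w$ fault-free in both scenarios, $u$ fault-free in both, and $v\in F_1\triangle F_2$; hence the forced bit is $1$ in the scenario containing $v$ and $0$ in the other, so $\sigma(F_1)\cap\sigma(F_2)=\emptyset$. Under (2), $u,v\in F_1\setminus F_2$ and $w\in V(G)\setminus(F_1\cup F_2)$, so the forced bit of $(u,v)_w$ is $1$ under $F_1$ (both $u,v$ faulty, $w$ fault-free) and $0$ under $F_2$ (all three fault-free); condition (3) is symmetric. In every case $F_1$ and $F_2$ produce disjoint syndrome sets.

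For the ``only if'' direction I would prove the contrapositive by constructing a single syndrome consistent with both $F_1$ and $F_2$ when none of (1)--(3) holds. For every test $(u,v)_w$ with $w\in F_1\cup F_2$, $w$ is faulty in at least one of the two scenarios, so I can assign the syndrome bit to equal whatever value is forced in the other scenario (and in the scenario where $w$ is faulty the bit is unconstrained). For tests with $w\in V(G)\setminus(F_1\cup F_2)$ I would enumerate the (up to ten unordered) possible placements of $\{u,v\}$ among the four regions and compute the forced bit in each scenario; a direct check shows that the two forced bits disagree \emph{only} in three configurations: $\{u,v\}\subseteq F_1\setminus F_2$, $\{u,v\}\subseteq F_2\setminus F_1$, or exactly one of $u,v$ in $F_1\triangle F_2$ and the other in $V(G)\setminus(F_1\cup F_2)$. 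These are precisely conditions (2), (3), (1) respectively; ruled out by hypothesis, they leave only configurations in which the two forced bits coincide, so the global syndrome is well-defined and consistent with both faulty sets, i.e., $F_1$ and $F_2$ are indistinguishable.

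The main obstacle is just ensuring the tabular check in the ``only if'' direction is exhaustive and symmetric in the roles of $u$ and $v$ (and of $F_1$ and $F_2$); once the four-region partition is fixed, the verification reduces to a small table of sixteen ordered pairs, and the argument carries no deeper combinatorial content than this bookkeeping.
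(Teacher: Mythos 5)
Your proposal is correct, and in fact it supplies something the paper does not: Proposition~\ref{MM} is imported from \cite{Sengupta} and stated without proof, so there is no in-paper argument to compare against. Your two directions both check out. For sufficiency, each of (1)--(3) yields a single test $(u,v)_w$ with a fault-free comparator whose forced outcome differs between the two scenarios (1 versus 0), so $\sigma(F_1)\cap\sigma(F_2)=\emptyset$. For necessity, your contrapositive construction is sound: tests with comparator in $F_1\cap F_2$ are unconstrained in both scenarios, tests with comparator in $F_1\triangle F_2$ can be assigned the value forced by the scenario in which the comparator is fault-free, and for comparators in $V(G)-F_1-F_2$ your region table is exhaustive --- the forced bits disagree exactly when both tested vertices lie in $F_1-F_2$, both lie in $F_2-F_1$, or one lies in $F_1\triangle F_2$ and the other outside $F_1\cup F_2$ (note that the pairs with one vertex in $F_1-F_2$ and the other in $F_2-F_1$, or one in $F_1\triangle F_2$ and one in $F_1\cap F_2$, correctly give agreeing bits equal to 1), and these three disagreement patterns are precisely conditions (2), (3), (1). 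This is essentially the original Sengupta--Dahbura argument specialized to the MM$^*$ test assignment, so it is a faithful and complete substitute for the citation.
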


\begin{figure}[!htb]
\centering
{\includegraphics[height=0.25\textwidth]{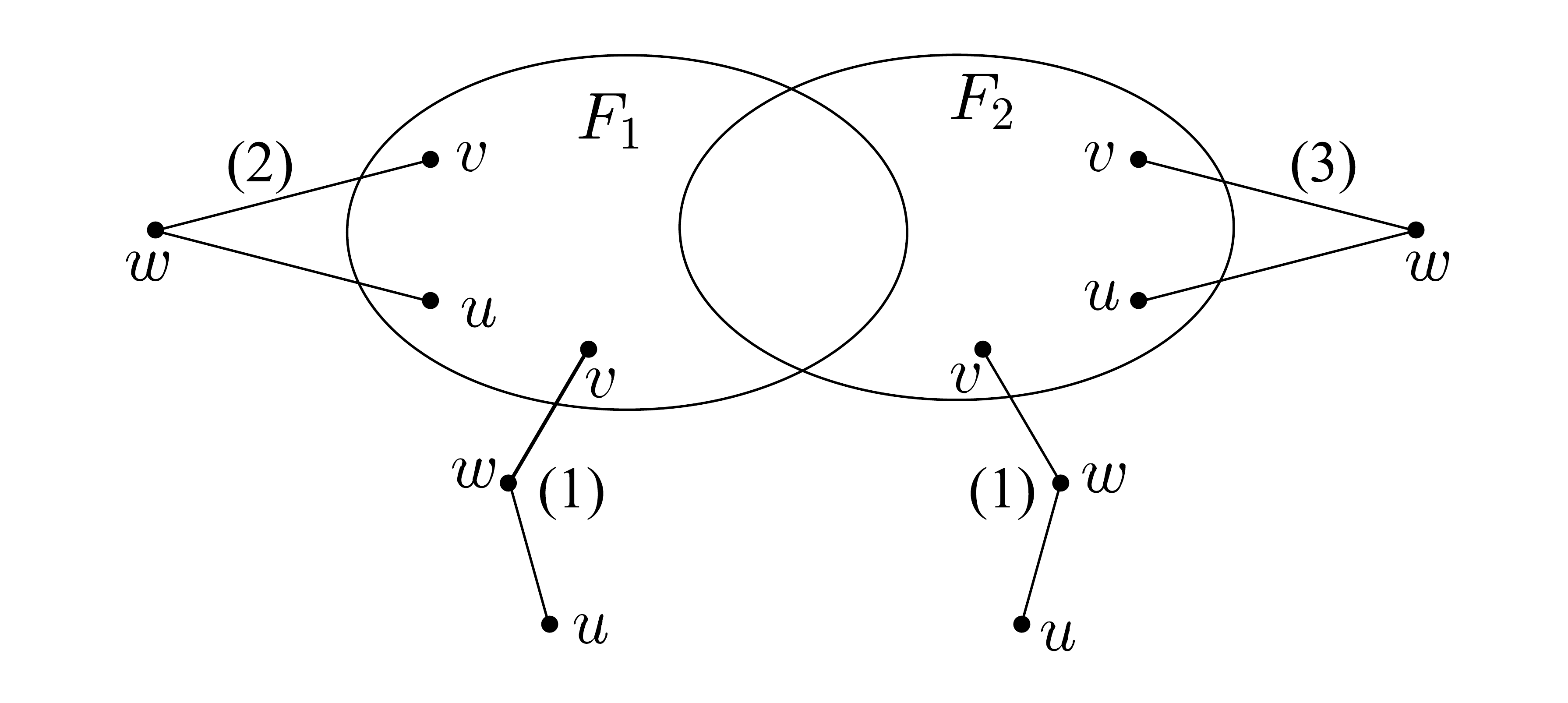}}

Figure 3 Illustration of a distinguishable pair under the MM$^*$ model
\end{figure}

In the following, we will give the definition of $g$-good-neighbor conditional faulty set and the $g$-good-neighbor conditional diagnosability of a system.

\begin{definition}
Let $G = (V, E)$ be a system. A faulty set $F\subseteq V(G)$ is called a $g$-good-neighbor conditional faulty set if $\delta(G-F)\geq g$.
\end{definition}

\begin{definition}
%A system $G = (V, E)$ is $g$-good-neighbor conditional $t$-diagnosable if for any two distinct $g$-good-neighbor conditional faulty sets $F_1, F_2\subseteq V(G)$ with $|F_1|, |F_2|\leq t$ are distinguishable.
A system $G = (V, E)$ is $g$-good-neighbor conditional $t$-diagnosable, if any two distinct $g$-good-neighbor conditional faulty sets $F_1, F_2\subseteq V(G)$ are distinguishable, provided that $|F_1|, |F_2|\leq t$. The $g$-good-neighbor conditional diagnosability of $G$, denoted by $t_g(G)$, is the maximum $t$ such that $G$ is $g$-good-neighbor conditionally $t$-diagnosable.
\end{definition}

\section{The $R^g$-vertex-connectivity of $LeTQ(s,t)$}

In order to obtain the $g$-good-neighbor conditional diagnosability of $LeTQ(s,t)$, we first investigate the $R^g$-vertex-connectivity of $LeTQ(s,t)$, which is closely related to $g$-good-neighbor conditional diagnosability proposed by Latifi \cite{Latifi}.  As a more refined index than the traditional connectivity, the $R^g$-vertex-connectivity can be used to measure the conditional fault tolerance of networks. In this section, we determine the $R^g$-vertex-connectivity of $LeTQ(s,t)$ for $1\leq s\leq t$ and $0\leq g\leq s$.

For the sake of simplicity, we always use $V$ to denote the vertex set of $LeTQ(s,t)$  in the following discussion.
For any $S\subseteq V(LeTQ(s,t))$, we use $N_V(S)$ and $N_V[S]$ to denote the neighborhood set and  closed neighborhood set of $S$ in $LeTQ(s,t)$.

For convenience, we write the consecutive $k$ 0's in the binary string as $0^k$.

\begin{lemma}%Lemma 3.1
\label{g-neighbor}
For $1\leq s \leq t$ and $0\leq g\leq s$, let $A=\{a_{s-1}a_{s-2}\cdots a_{s-g} 0^{s-g+t+1}~|~a_i\in \{0,1\},~s-g\leq i\leq s-1\}\subset V$, $F_1=N_{V}(A)$ and $F_2=N_{V}[A]$. Then $|F_1|=2^g(s-g+1)$ and $|F_2|=2^g(s-g+2)$. Furthermore, $F_1$ is a $g$-good-neighbor conditional faulty set and $F_2$ is a $\max\{s-1,g\}$-good-neighbor conditional faulty set of $LeTQ(s,t)$.
\end{lemma}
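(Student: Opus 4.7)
The plan is to establish the cardinalities by an explicit enumeration of the neighborhoods $N_V(u)$ for $u \in A$, and then verify the two good-neighbor conditions by case analysis on vertices $v$ outside $F_1$ and outside $F_2$. For the cardinality step, every $u = a_{s-1}\cdots a_{s-g}\,0^{s-g+t+1} \in A$ has $a_0=0$ and all $b$-bits zero, so by Definition~\ref{LeTQ} its $s+1$ neighbors are the cross-neighbor $u^{*}$ (rule~(1)) together with the $s$ vertices obtained by flipping a single $a$-bit $a_k$ (rule~(3a) when $k\in\{0,1\}$, rule~(3c) when $k\ge 2$, both valid because $a_0=0$). Flipping a free top bit $a_k$ with $k\ge s-g$ keeps the vertex inside $A$, while flipping a fixed bottom bit $a_k$ with $k\le s-g-1$ moves it outside $A$, so each $u$ contributes exactly $s-g+1$ external neighbors. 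Pairwise distinctness across $A$ is immediate: the cross-neighbors $u^{*}$ are distinct by Proposition~\ref{LR2}, and each $L$-neighbor $w$ of $A$ has its top $g$ $a$-bits agreeing with its parent and exactly one $1$ in the bottom $s-g$ positions, which uniquely determines the parent $u(w) \in A$. This gives $|F_1|=2^g(s-g+1)$ and $|F_2|=|F_1|+|A|=2^g(s-g+2)$.

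For the $g$-good-neighbor assertion on $F_1$, I first note that the $g$ free top bits of $A$ generate a $g$-regular induced subgraph on $A$ (via rule~(3a) for index $1$ and rule~(3c) for higher indices, both using $a_0=0$), so each $u \in A$ already has $g$ fault-free neighbors inside $A$. For $v \in V \setminus F_2$ the aim is $|N_V(v) \cap F_1| \le d_V(v) - g$. I split by the location of $v$: if $v \in L$ and $B(v) \ne 0^t$, the Class-$0$ cluster of $v$ is disjoint from $L_0$ so no $L$-neighbor of $v$ lies in $F_1\cap L$, and $v^{*}\notin F_1$ because $v\notin A$; hence $N_V(v) \cap F_1 = \emptyset$. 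If $v \in R$, then Proposition~\ref{LR1}(2) forces at most one vertex of $F_1 \cap R$ into $v$'s Class-$1$ cluster and $v^{*} \notin F_1$, so $|N_V(v) \cap F_1| \le 1$ and the fault-free degree is at least $t \ge g$. The remaining sub-case $v \in L_0 \setminus F_2$ forces the bottom $s-g$ bits of $A(v)$ to have Hamming weight $\omega \ge 2$; here I enumerate the possible $F_1 \cap L$-neighbors of $v$ by going through rules~(3a), (3b), (3c) and use parent-uniqueness to bound the count.

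The argument for $F_2$ being $\max\{s-1,g\}$-good-neighbor runs in parallel, now additionally tracking $|N_V(v) \cap A|$; a short analysis using rules~(3a)/(3b)/(3c) shows that $N_V(v) \cap A = \emptyset$ when $\omega \ge 2$, so $|N_V(v) \cap F_2| = |N_V(v) \cap F_1|$ in the delicate sub-case. The main obstacle is precisely this sub-case: both rule~(3a) (flipping $a_0$ or $a_1$) and rule~(3b) (flipping $a_2,a_1$, enabled by $a_0=1$) can deposit a neighbor of $v$ into $F_1 \cap L$, so I would combine the parent-uniqueness of $F_1 \cap L$ with Proposition~\ref{common neighbor} (no $K_{2,3}$) to cap the common neighbors of $v$ with members of $A$. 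Once this count is controlled, the fault-free degree bounds follow by comparing $|N_V(v) \cap F_1|$ and $|N_V(v) \cap F_2|$ with $d_V(v)$, completing the verification of both good-neighbor conditions.
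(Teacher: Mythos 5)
The cardinality computation and the easy cases of your plan ($v$ in a different Class-$0$ cluster, $v\in R$, the $g$-regularity of the subgraph induced by $A$) are essentially the paper's own argument and are fine. The genuine gap is the step you yourself call the main obstacle: the sub-case $v$ in the special cluster with $B(v)=0^t$ and bottom weight $\omega\ge 2$ is never carried out, and the tool you propose for it cannot deliver the required bound. Proposition~\ref{common neighbor} caps the common neighbors of $v$ with one \emph{fixed} vertex of $A$ at two, whereas the $F_1\cap L$-neighbors of $v$ in general have \emph{different} parents in $A$, so parent-uniqueness plus $K_{2,3}$-freeness gives no bound of the form $|N_V(v)\cap F_1|\le 2$; and since $d_V(v)=s+1$, a cap of two is exactly what the $\max\{s-1,g\}$ claim for $F_2$ needs.

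In fact that cap is false, so the plan cannot be completed as stated. Take $g=s-2$ with $s\ge 3$ and $v$ with $A(v)=a_{s-1}\cdots a_2 11$, $B(v)=0^t$, $C(v)=0$. Then $v\notin F_2$, yet the rule-(3a) flips of $a_0$ and of $a_1$ and the rule-(3b) flip of $a_2a_1$ (legal since $a_0=1$) all produce vertices with exactly one $1$ in the two bottom positions, i.e.\ three neighbors of $v$ lie in $F_1$; the only neighbors outside $F_2$ are the cross edge and the rule-(3b) flips with $k\ge 3$, so $v$ has exactly $s-2=g$ fault-free neighbors. Hence in this sub-case only the $g$-good-neighbor property of $F_2$ (which is all Theorem~\ref{geq} actually uses) can be verified, not the $(s-1)$-good one; to your credit you spotted rule~(3b) as the danger --- the paper's own proof silently ignores it when it lists only the two single-flip vertices in $F_1\cap N_V(u)$ --- but flagging the obstacle is not the same as resolving it. A smaller slip: for $v\in R$ your assertion $v^*\notin F_1$ is wrong (for $g\le s-1$ and $v=0^{s-1}1\,0^t\,1$ one has $v\notin F_2$ and $v^*\in F_1\cap L$); the bound $|N_V(v)\cap F_1|\le 1$ does hold, but only because $v^*\in F_1\cap L$ forces the bottom bits of $A(v)$ to have weight one, which in turn excludes any neighbor of $v$ in $A^*$ --- an argument you would still need to supply.
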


\begin{proof}
First we consider the case of $g=s$.  By Proposition~\ref{LR1}, the subgraph induced by $A$ is a $LTQ_s$, then $|F_1|=|A|=2^s=2^g(s-g+1)$. Note that $A\subseteq L$ and $F_1\subseteq R$, $|F_2|=|F_1|+|A|=2^s+2^s=2^{s+1}=2^g(s-g+2)$. Let $u$ be any vertex of $V-F_2$. If $u$ belongs to a  Class-0 cluster, then all the neighbors of $u$ are out of $F_2$ by Propositions~\ref{LR1} and \ref{LR2}, i.e., $u$  has $s+1$ neighbors out of $F_2$ by Proposition~\ref{degree}. If $u$ belongs to a  Class-1 cluster, then $u$ has at most one neighbor in $F_1$ as any two vertices of $F_1$ belong to different Class-1 clusters, i.e., $u$  has $t\geq s$ neighbors out of $F_2$. Thus $\delta(V-F_2)\geq s$ and hence $F_2$ is a $s$-good-neighbor conditional faulty set of $LeTQ(s,t)$.
Note that the subgraph induced by $A$ is a $LTQ_s$, then $\delta(V-F_1)\geq s$. Thus $F_1$ is a $s$-good-neighbor conditional faulty set of $LeTQ(s,t)$.

Then in the following, we consider the case of $g\leq s-1$.
Let $v=a_{s-1}a_{s-2}\cdots a_{s-g} 0^{s-g+t+1}\in A$. Since $s-g\geq 1$, $v$ has $g$ neighbors $\{a_{s-1}\cdots a_{s-g+1}\overline{a_{s-g}} 0^{s-g+t+1}, a_{s-1}\cdots a_{s-g+2}\overline{a_{s-g+1}}a_{s-g}0^{s-g+t+1},$ $ \ldots, a_{s-1}\overline{a_{s-2}}a_{s-3}\cdots a_{s-g}0^{s-g+t+1}$, $\overline{a_{s-1}}a_{s-2}\cdots a_{s-g}0^{s-g+t+1}\}$ in $A$,  and thus the subgraph induced by $A$ is a $g$-regular graph. Denote $A^*=\{a_{s-1}a_{s-2}\cdots a_{s-g}0^{s-g+t}1~|~a_i\in \{0,1\},~s-g\leq i\leq s-1\}$, then any two vertices of $A^*$ belong to different Class-1 clusters by Propositions~\ref{LR2}. Since $F_1=N_{V}(A)$, we have
$$F_1=\left(\cup_{0\leq j\leq s-g-1}\{a_{s-1}a_{s-2}\cdots a_{s-g}0^{s-g-j-1}10^{j+t+1}~|~a_i\in \{0,1\},~s-g\leq i\leq s-1\}\right)\bigcup A^*.$$ Then  $|F_1|=2^g(s-g+1)$. Note that $F_1\cap A=\emptyset$, and hence $|F_2|=|F_1|+|A|=2^g(s-g+1)+2^g=2^g(s-g+2)$.

Note that the subgraph induced by $A$ is a $g$-regular graph. In order to show $F_1$ is a $g$-good-neighbor conditional faulty set and $F_2$ is a $(s-1)$-good-neighbor conditional faulty set of $LeTQ(s,t)$,
we only need to show that $u$ has at least $s-1$ neighbors out of $F_2$ for any $u\in V-F_2$.

Denote $L_1=\{a_{s-1}a_{s-2}\cdots a_1a_00^{t+1}~|~a_i\in \{0,1\},~0\leq i\leq s-1\}$. Then $A\subseteq L_1$ and $F_1-A^*\subseteq L_1$.

First we assume $u\in L_1$. Then we may assume $u=u_{s-1}u_{s-2}\cdots u_1u_00^{t+1}$, and there are $t$ ($t\geq 2$) bits of $u_{s-g-1}\cdots u_1u_0$ equal 1 and the other $s-g-t$ bits equal 0.  If $u$ has a neighbor in $F_1$, then $t=2$. Thus $u=u_{s-1}u_{s-2}\cdots u_{s-g}0^{p}10^{s-g-p-q-2}10^{q+t+1}$ for $p,q\geq 0$ and $p+q\leq s-g-2$. Then $F_1\cap N_V(u)=\{u_{s-1}u_{s-2}\cdots u_{s-g}0^{s-g-q-1}10^{q+t+1},u_{s-1}u_{s-2}\cdots u_{s-g}0^{p}10^{s-g-p+t}\}$. Since $u$ is a vertex of degree $s+1$ by Proposition~\ref{degree}, $u$ has $s-1$ neighbors out of $F_2$.

Then we assume $u\in L_i$ for some $i$ ($2\leq i\leq 2^t$), then all the neighbors of $u$ are out of $F_2$ by Propositions~\ref{LR1} and \ref{LR2}, i.e., $u$  has $s+1$ neighbors out of $F_2$ by Proposition~\ref{degree}.

Now we assume $u\in R_j$ for some $j$ ($1\leq j\leq 2^s$), then $u$ has at most one neighbor in $A^*$ or at most one neighbor in $F_1$ by Proposition~\ref{LR2}. Since $u$ is a vertex of degree $t+1$ by Proposition~\ref{degree}, $u$ has $t\geq s$ neighbors out of $F_2$.

Therefore we complete the proof of Lemma~\ref{g-neighbor}.
\end{proof}

\begin{lemma} \cite{CCYW}
\label{g-01}
For $1\leq s\leq t$ and  $0\leq g\leq 1$, $\kappa^g(LeTQ(s,t))= 2^g(s-g+1)$.
\end{lemma}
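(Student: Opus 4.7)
The upper bound $\kappa^g(LeTQ(s,t)) \leq 2^g(s-g+1)$ follows directly from Lemma~\ref{g-neighbor}: the set $F_1 = N_V(A)$ constructed there has cardinality exactly $2^g(s-g+1)$, is a $g$-good-neighbor conditional faulty set, and its removal separates $A$ (which induces a $g$-regular subgraph) from the rest of $LeTQ(s,t)$, so $F_1$ is an $R^g$-vertex-cut of the required cardinality.

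For the lower bound when $g=0$, I would show $\kappa(LeTQ(s,t)) \geq s+1$ by induction on $s+t$. The base case $LeTQ(1,1)$ is an $8$-cycle with connectivity $2=s+1$. For the inductive step, apply Proposition~\ref{two-sub} (together with Proposition~\ref{cong} to keep the parameters in the order $s\le t$) to split $LeTQ(s,t)=LeTQ^0(s,t)\cup LeTQ^1(s,t)$, with each side isomorphic to a smaller $LeTQ$ having connectivity at least $s$ by induction, and $2^{s+t-1}$ independent cross edges between them. Any cut $F$ with $|F|\leq s$ is too small to both destroy connectivity inside one side and also sever enough of the $2^{s+t-1}$ cross edges, so $LeTQ(s,t)-F$ must remain connected, a contradiction.

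For the lower bound when $g=1$, suppose for contradiction that $F$ is an $R^1$-vertex-cut of $LeTQ(s,t)$ with $|F|\leq 2s-1$, and let $C$ be a component of $LeTQ(s,t)-F$ of smallest size. Since $\delta(LeTQ(s,t)-F)\geq 1$, Proposition~\ref{LeTQ-neighbor} with $g=1$ forces $|C|\geq 2$. The aim is to prove $|N_V(C)|\geq 2s$, which contradicts $|F|\geq |N_V(C)|$. When $|C|=2$, say $C=\{u,v\}$, the two vertices are adjacent and Proposition~\ref{common neighbor} (no triangles) gives $N_V(u)\cap N_V(v)=\emptyset$; combined with $d_G(u),d_G(v)\geq s+1$ (Proposition~\ref{degree}) this yields $|N_V(C)|=d_G(u)+d_G(v)-2\geq 2s$. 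When $|C|\geq 3$, pick any edge $uv$ in $C$ (available since $\delta(C)\geq 1$) and apply the same estimate to get $|N_V(\{u,v\})\setminus\{u,v\}|\geq 2s$; then split these $2s$ vertices into those landing in $C\setminus\{u,v\}$ and those in $F$, and use the $K_{2,3}$-free property of Proposition~\ref{common neighbor} (which caps $|N_V(x)\cap N_V(y)|\leq 2$ for any two vertices $x,y$) to ensure every extra vertex $x\in N_V(\{u,v\})\cap(C\setminus\{u,v\})$ contributes sufficiently many fresh neighbors into $F$ to compensate for the ``lost'' ones. A careful accounting then delivers $|N_V(C)|\geq 2s$.

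The main obstacle is the subcase $|C|\geq 3$ in the $g=1$ argument: one must balance the portion of $N_V(\{u,v\})$ absorbed into $C$ against the new neighbors contributed by the remaining vertices of $C$ into $F$. This balance relies simultaneously on the triangle-free and $K_{2,3}$-free properties from Proposition~\ref{common neighbor}, on the degree asymmetry between $L$ and $R$ from Proposition~\ref{degree}, and on the cross-edge structure from Propositions~\ref{LR1} and~\ref{LR2}. Either a short induction on $|C|$, or a case analysis on how $C$ distributes between $L$ and $R$ via cross edges, appears to be the most direct route to closing the argument.
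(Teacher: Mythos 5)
First, a point of comparison: the paper offers no proof of Lemma~\ref{g-01} at all --- it is quoted from \cite{CCYW}, where the connectivity ($g=0$) and super connectivity ($g=1$) of $LeTQ(s,t)$ are established --- so there is no in-paper argument to measure you against. Your upper-bound half is fine and is exactly the device the paper itself uses inside Theorem~\ref{g-cut}: the set $A$ of Lemma~\ref{g-neighbor} gives an $R^g$-vertex-cut $F_1=N_V(A)$ of size $2^g(s-g+1)$ (and Lemma~\ref{g-neighbor} is proved before and independently of Lemma~\ref{g-01}, so there is no circularity). Your $|C|=2$ computation in the $g=1$ case is also correct: triangle-freeness (Proposition~\ref{common neighbor}) plus $\delta(LeTQ(s,t))=s+1$ gives $|N_V(C)|\ge 2s$.

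Both lower bounds, however, have genuine gaps. For $g=0$, the inductive step rests on the assertion that a cut $F$ with $|F|\le s$ ``is too small'' to disconnect one half and sever the cross edges, but the $2^{s+t-1}$ cross edges of Proposition~\ref{two-sub} form a matching that misses half the vertices of each half (splitting on $b_j$, only the $c=1$ vertices have neighbors on the other side), so a piece of $LeTQ_x^1(s,t)-F_1$ lying in $L$ need not be attached to the other half by any cross edge; moreover, when $s=t$ the halves are isomorphic to $LeTQ(s,s-1)\cong LeTQ(s-1,s)$, whose connectivity is only $s$ by the induction hypothesis, so a cut of size exactly $s$ \emph{can} disconnect one half and you must then analyse where the detached component's cross neighbors land. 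None of this case analysis appears in the proposal. For $g=1$, the subcase $|C|\ge 3$ is precisely the nontrivial content of the bound $\kappa^1\ge 2s$, and you leave it at ``a careful accounting'': the triangle-free/$K_{2,3}$-free bookkeeping you sketch does not by itself force $|N_V(C)|\ge 2s$, since neighbors of the chosen edge that are absorbed into $C$ must be compensated by vertices whose degree may be only $s+1$, and the inequality is tight with no slack (a whole Class-0 cluster has exactly $2^s$ outside neighbors, which equals $2s$ for $s=1,2$), so the compensation step is exactly where the work lies. Additionally, using the $\delta(G[C])\ge 1$ hypothesis and the minimality of $C$ is essential here (otherwise $C=V-N[v]$ is a counterexample to the bare claim), and the proposal does not show how these enter the accounting. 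As it stands, the proposal reproves only the easy half of the cited lemma; completing it would require essentially the analysis of \cite{CCYW} or a finished induction replacing the two hand-waved steps.
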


\begin{theorem}%Theorem 3.3
\label{g-cut}
For $1\leq s\leq t$ and $0\leq g\leq s$, $\kappa^g(LeTQ(s,t))= 2^g(s-g+1)$.
\end{theorem}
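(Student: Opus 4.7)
The upper bound $\kappa^g(LeTQ(s,t)) \le 2^g(s-g+1)$ is immediate from Lemma~\ref{g-neighbor}: the set $F_1 = N_V(A)$ has size $2^g(s-g+1)$, it is a $g$-good-neighbor conditional faulty set, and it is an $R^g$-vertex-cut since $A$ is nonempty, $N_V(A) \subseteq F_1$, and $V \setminus (A\cup F_1)$ is nonempty because $|A|+|F_1| = 2^g(s-g+2) < 2^{s+t+1}$ under $g\le s\le t$.

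For the lower bound, the cases $g\in\{0,1\}$ are covered by Lemma~\ref{g-01}, so I assume $g\ge 2$ and hence $s\ge 2$. Let $F$ be an arbitrary $R^g$-vertex-cut of $LeTQ(s,t)$ and let $C$ be a smallest component of $LeTQ(s,t)-F$. The good-neighbor hypothesis gives $\delta(C)\ge g$, and Proposition~\ref{LeTQ-neighbor} then gives $|V(C)|\ge 2^g$. Because $N_V(V(C))\subseteq F$, the theorem reduces to the purely structural inequality $|N_V(V(C))|\ge 2^g(s-g+1)$.

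I would establish this by induction on $s+t$, using Proposition~\ref{two-sub} to split $LeTQ(s,t)$ along some coordinate (say $b_{t-1}$ when $t\ge 2$) into two subgraphs $H_0,H_1\cong LeTQ(s,t-1)$ joined by $2^{s+t-1}$ independent cross-edges. Writing $C_i=V(C)\cap V(H_i)$ and $F_i=F\cap V(H_i)$, the independence of the cross-edges ensures that every surviving vertex of $H_i-F_i$ loses at most one neighbor to the other half, so the effective good-neighbor bound inside $H_i$ drops by at most one. The argument splits naturally: if $V(C)\subseteq V(H_i)$, then $F_i$ is itself an $R^g$-vertex-cut of $H_i$ (perhaps after absorbing a few cross-neighbors), and the inductive hypothesis for parameter $g$ applies; if $V(C)$ meets both halves, then each $C_i$ together with its boundary inside $H_i$ contributes an $R^{g-1}$-vertex-cut of $H_i$, and the two boundaries together with the cross-edges to $F$ can be summed without double counting, again yielding at least $2^g(s-g+1)$ after a short arithmetic check that $2\cdot 2^{g-1}(s-g+2)\ge 2^g(s-g+1)$.

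The crux of the argument will be the extremal regime $|V(C)|=2^g$. Here $C$ must be a $g$-regular induced subgraph on $2^g$ vertices of $LeTQ(s,t)$; Proposition~\ref{common neighbor} (no $K_{2,3}$) together with the cluster structure of Proposition~\ref{LR1} and Proposition~\ref{LR2} should pin down $V(C)$ as an embedded $LTQ_g$ lying inside a single Class-$c$ cluster, exactly matching the configuration of $A$ from Lemma~\ref{g-neighbor}. A careful bit-by-bit enumeration of its outside neighbors---those reached by flipping each of the $s-g$ unfixed coordinates of the cluster, together with the $2^g$ cross-edge partners---then yields exactly $|N_V(V(C))|=2^g(s-g+1)$. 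The main obstacle is precisely this structural rigidity step: ruling out exotic $g$-regular subgraphs on $2^g$ vertices that straddle multiple clusters and that might have a smaller boundary. I expect to handle this by combining the no-$K_{2,3}$ property, the fact that there is at most one cross-edge per vertex, and a descent argument on $s+t$ that pushes the configuration into a single cluster where the classical vertex-isoperimetric count for $LTQ_g$ inside $LTQ_s$ applies.
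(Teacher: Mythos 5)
Your upper bound is fine and matches the paper. The lower bound, however, has a genuine gap, and it sits exactly where the real work of the theorem lies. You reduce the problem to the ``purely structural'' claim that any connected induced subgraph $C$ with $\delta(C)\geq g$ (and nonfull closed neighborhood) satisfies $|N_V(V(C))|\geq 2^g(s-g+1)$. That is strictly stronger than the theorem: the definition of an $R^g$-vertex-cut gives minimum degree $\geq g$ on \emph{both} sides of the cut, and the paper's induction uses this on both sides (each vertex of a half minus $S_k$ loses at most one cross neighbor, so $S_k$ becomes an $R^{g-1}$-vertex-\emph{cut} of the half, to which $\kappa^{g-1}$ applies). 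Your sketch never proves the stronger isoperimetric statement; it only gestures at it, and your own inductive step quietly reinstates the two-sided hypothesis, so the statement you claim to induct on is not the one your argument actually needs. Worse, the induction as sketched only works when the relevant half minus its fault set is actually disconnected. If $V(C)$ lies in one half and all other surviving vertices lie in the other half, $F_i$ need not be a vertex cut of $H_i$ at all, and ``absorbing a few cross-neighbors'' does not repair this. The paper's proof is organized precisely around this obstruction: it considers the decomposition over \emph{every} coordinate $x\in\{a_0,\dots,a_{s-1},b_0,\dots,b_{t-1}\}$, handles the case where both halves stay connected by counting the $2^{s+t-1}$ independent cross edges (Case 1), and in the residual case (for every $x$ exactly one half is disconnected) argues that the minimum component $X$ is forced to agree with a fixed value in every coordinate, hence $|X|\leq 2$, $X$ is a single cross edge, $g=1$, and $|S|=s+t\geq 2s$ (Case 3). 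Splitting along a single fixed coordinate such as $b_{t-1}$, as you propose, cannot reproduce this coordinate-pinning argument, and it also creates a parameter problem: when $s=t$ and $g=s$, the halves are isomorphic to $LeTQ(s-1,s)$, where the inductive statement with the same $g$ is not even available (one needs $g\leq\min$ of the two parameters); the paper avoids this by inducting on $g$ rather than on $s+t$.

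Finally, the step you single out as the crux---classifying the extremal $C$ with $|V(C)|=2^g$ as an embedded $LTQ_g$ inside one cluster---is neither needed nor sufficient. The paper never classifies extremal configurations, and your bound must hold for all admissible $C$ (including those much larger than $2^g$, e.g.\ a whole Class-$1$ cluster), not just in the extremal regime, so even a complete rigidity argument for $|V(C)|=2^g$ would not close the lower bound. As it stands, the proposal establishes the easy inequality $\kappa^g\leq 2^g(s-g+1)$ but not the reverse one.
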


\begin{proof}
First we show that $\kappa^g(LeTQ(s,t))\leq  2^g(s-g+1)$. Let $$A=\{a_{s-1}a_{s-2}\cdots a_{s-g} 0^{s-g+t+1}~|~a_i\in \{0,1\},~s-g\leq i\leq s-1\}\subset V$$ and $F_1=N_{V}(A)$. Clearly, $LeTQ(s,t)-F_1$ is disconnected. By Lemma~\ref{g-neighbor}, $|F_1|=2^g(s-g+1)$ and $F_1$ is a $g$-good-neighbor conditional faulty set. Then $F_1$ is a $R^g$-vertex-cut of $LeTQ(s,t)$. Thus $\kappa^g(LeTQ(s,t))\leq  2^g(s-g+1)$.

Now we show  $\kappa^g(LeTQ(s,t))\geq  2^g(s-g+1)$ by induction on $g$. If $g=0$, the statement holds by Lemma~\ref{g-01}. Assume the induction hypothesis for $g-1$ with $g\geq 1$, that is, $\kappa^{g-1}(LeTQ(s,t))= 2^{g-1}(s-g+2)$. If $s=1$, then $g=1$, by Lemma~\ref{g-01}, $\kappa^1(LeTQ(s,t))=2$ for any $t\geq 1$, the statement is true. Thus we assume $2\leq s\leq t$.

Let $S$ be any $R^g$-vertex-cut of $LeTQ(s,t)$, $X$ the vertex set of a minimum connected component of $LeTQ(s,t)-S$ and $Y$ the set of vertices in $LeTQ(s,t)-S$ not in $X$. By Proposition~\ref{two-sub}, $LeTQ(s, t)$ can be decomposed into two isomorphic subgraphs $LeTQ_{x}^0(s, t)$ and $LeTQ_{x}^1(s, t)$ by fixing $x$, which are isomorphic to $LeTQ(s-1, t)$ if $x\in \{a_0,a_1,\ldots,a_{s-1}\}$, and isomorphic to  $LeTQ(s, t-1)$ if $x\in \{b_0,b_1,\ldots,b_{t-1}\}$. Denote $S_0=S\cap V(LeTQ_{x}^0(s, t))$ and $S_1=S\cap V(LeTQ_{x}^1(s, t))$. Let $A_1,A_2,\cdots A_p$ be the components of $LeTQ_x^0(s, t)-S_0$, and $B_1,B_2,\cdots B_q$ the components of $LeTQ_x^1(s, t)-S_1$, where $p,q\geq 1$.

\vskip 0.2cm
\noindent{\bf Case 1.} There exists $x\in \{a_0,a_1,\ldots,a_{s-1},b_0,b_1,\ldots,b_{t-1}\}$ such that $p=q=1$.
\vskip 0.2cm

In this case, there are no edges between $A_1$ and $B_1$ in $LeTQ(s,t)-S$ as $LeTQ(s,t)-S$ is disconnected. Then all the neighbors of $A_1$ in $LeTQ_{x}^1(s, t)$ belong to $S_1$ and all the neighbors of $B_1$ in $LeTQ_{x}^0(s, t)$ belong to $S_0$. Note that there are $2^{s+t-1}$ independent edges between $LeTQ_{x}^0(s, t)$ and $LeTQ_{x}^1(s, t)$, and hence each edge has at least one end vertex in $S$, then

~~~~~~~$|S|\geq 2^{s+t-1}= 2^g2^{(s-g+1)+(t-2)}\geq 2^g((s-g+1)+(t-2))\geq 2^g(s-g+1)$.

\vskip 0.2cm
\noindent{\bf Case 2.}  There exists $x\in \{a_0,a_1,\ldots,a_{s-1},b_0,b_1,\ldots,b_{t-1}\}$ such that  $p\geq 2$ and $q\geq 2$.
\vskip 0.2cm

In this case, $S_k$ is a vertex-cut of $LeTQ_{x}^k(s, t)$ as $LeTQ_{x}^k(s, t)-S_k$ is disconnected, where $k\in \{0,1\}$.
By Proposition~\ref{two-sub}, each vertex of $LeTQ_{x}^k(s, t)$ has at most one neighbor in $LeTQ_{x}^{\bar{k}}(s, t)$, then $\delta(LeTQ_{x}^k(s, t)-S_k)\geq g-1$ as $\delta(LeTQ(s, t)-S)\geq g$. Thus $S_k$ is a $R^{g-1}$-vertex-cut of $LeTQ_{x}^k(s, t)$. By induction hypothesis, $|S_k|\geq 2^{g-1}(s-g+1)$ if $x\in \{a_0,a_1,\ldots,a_{s-1}\}$ and $|S_k|\geq 2^{g-1}(t-g+1)\geq 2^{g-1}(s-g+1)$ if $x\in \{b_0,b_1,\ldots,b_{t-1}\}$.
Then $|S|=|S_0|+|S_1|\geq 2^g(s-g+1)$.

%\begin{figure}[!htb]\centering{\includegraphics[height=0.23\textwidth]{F4}}Figure 4 Components of $LeTQ_x^0(s, t)-S_0$ and $LeTQ_x^1(s, t)-S_1$\end{figure}

\vskip 0.2cm
\noindent{\bf Case 3.}  For any $x\in \{a_0,a_1,\ldots,a_{s-1},b_0,b_1,\ldots,b_{t-1}\}$, we have $p=1$, $q\geq 2$, or $q=1$, $p\geq 2$.
\vskip 0.2cm
If $p=1$ and $q\geq 2$ for some $x\in \{a_0,a_1,\ldots,a_{s-1},b_0,b_1,\ldots,b_{t-1}\}$, then $S_1$ is a vertex-cut of $LeTQ_{x}^1(s, t)$ as $LeTQ_{x}^1(s, t)-S_1$ is disconnected.
By Proposition~\ref{two-sub}, each vertex of $LeTQ_{x}^1(s, t)$ has at most one neighbor in $LeTQ_{x}^{0}(s, t)$, then $\delta(LeTQ_{x}^1(s, t)-S_1)\geq g-1$.
Thus $S_{1}$ is a $R^{g-1}$-vertex-cut of $LeTQ_{x}^{1}(s, t)$. By induction hypothesis, $|S_1|\geq 2^{g-1}(s-g+1)$ if $x\in \{a_0,a_1,\ldots,a_{s-1}\}$
and $|S_1|\geq 2^{g-1}(t-g+1)\geq 2^{g-1}(s-g+1)$ if $x\in \{b_0,b_1,\ldots,b_{t-1}\}$. If $|S_0 |\geq |S_1|$, then $|S|=|S_0|+|S_{1}|\geq 2|S_1|\geq 2^g(s-g+1)$.
So we assume $|S_0 |< |S_1|$. Then $|A_1|\geq |B_k|$ for any $1\leq k\leq q$. Note that $X$ is the vertex set of a minimum connected component of $LeTQ(s,t)-S$,
and hence $A_1\subseteq Y$, which means $X\subseteq V(LeTQ_x^1(s, t))-S_1$. Recall that $LeTQ_{x}^i(s, t)$ is the subgraph of $LeTQ(s, t)$ by fixing $x=i$ for $i\in \{0,1\}$ and $x\in\{a_0,a_1,\ldots,s_{s-1},b_0,b_1,\ldots,b_{s-1}\}$.  Hence, for any vertex $x_{s-1}\cdots x_1x_0y_{t-1}\cdots y_1y_0 z$ of $X$, $x_i=x=1$ ($0\le i\le s-1$) or $y_j=x=1$ ($0\le j\le t-1$).
By a similar argument, if $q=1$ and $p\geq 2$ for some $x\in \{a_0,a_1,\ldots,a_{s-1},b_0,b_1,\ldots,b_{t-1}\}$, then $X\subseteq V(LeTQ_x^0(s, t))-S_0$, and thus for any vertex $x_{s-1}\cdots x_1x_0y_{t-1}\cdots y_1y_0 z$ of $X$, we have $x_i=x=0$ ($0\le i\le s-1$) or $y_j=x=0$ ($0\le j\le t-1$).
Therefore, for any two vertices $x_{s-1}\cdots x_1x_0y_{t-1}\cdots y_1y_0 z$, $x'_{s-1}\cdots x'_1x'_0 y'_{t-1}\cdots y'_1y'_0 z'$ of $X$, we have $x_i=x'_i$ and $y_j=y'_j$ for all $0\leq i\leq s-1$ and $0\leq j\leq t-1$. Thus $|X|\leq 2$.
Since $g\geq 1$, then $|X|\ge 2$. Thus $|X|=2$ and $X=\{x_{s-1}\cdots x_1x_0y_{t-1}\cdots y_1y_0 0, x_{s-1}\cdots x_1x_0y_{t-1}\cdots y_1y_0 1\}$ for some $x_i,y_j\in \{0,1\}$ ($0\le i\le s-1$, $0\le j\le t-1$), i.e., the subgraph induced by $X$ is a cross edge. Hence $g=1$ and $|S|=s+t\geq 2s=2^g(s-g+1)$.

%Without loss of generality, we assume $p=1$ and $q\geq 2$. Then $S_1$ is a vertex-cut of $LeTQ_{x}^1(s, t)$ as $LeTQ_{x}^1(s, t)-S_1$ is disconnected. By Proposition~\ref{two-sub}, each vertex of $LeTQ_{x}^1(s, t)$ has at most one neighbor in $LeTQ_{x}^{0}(s, t)$, then $\delta(LeTQ_{x}^1(s, t)-S_1)\geq g-1$. Thus $S_{1}$ is a $R^{g-1}$-vertex-cut of $LeTQ_{x}^{1}(s, t)$. By induction hypothesis, $|S_1|\geq 2^{g-1}(s-g+1)$ if $x\in \{a_0,a_1,\ldots,a_{s-1}\}$ and $|S_1|\geq 2^{g-1}(t-g+1)\geq 2^{g-1}(s-g+1)$ if $x\in \{b_0,b_1,\ldots,b_{t-1}\}$. If $|S_0 |\geq |S_1|$, then $|S|=|S_0|+|S_{1}|\geq 2|S_1|\geq 2^g(s-g+1)$. So we assume $|S_0 |< |S_1|$. Then $|A_1|\geq |B_k|$ for any $1\leq k\leq q$. Note that $X$ is the vertex set of a minimum connected component of $LeTQ(s,t)-S$, and hence $A_1\subseteq Y$, which means $X\subseteq V(LeTQ_x^1(s, t))-S_1$.  %By a similar argument, if $q=1$ and $p\geq 2$, then $X\subseteq V(LeTQ_x^0(s, t))-S_0$. Since $g\geq 1$, $|X|\ge 2$. So for any two vertices $x_{s-1}\cdots x_1x_0y_{t-1}\cdots y_1y_0 z, x'_{s-1}\cdots x'_1x'_0 y'_{t-1}\cdots y'_1y'_0 z'$ of $X$, we have $x_i=x'_i$ and $y_j=y'_j$ for $0\leq i\leq s-1$ and $0\leq j\leq t-1$. Then $|X|\leq 2$, i.e., $|X|=2$, and then the subgraph induced by $X$ is a cross edge. Thus $g=1$ and $|S|=s+t\geq 2s=2^g(s-g+1)$.

Therefore we complete the proof of Theorem~\ref{g-cut}.
\end{proof}

\section{The $g$-good-neighbor conditional diagnosability of $LeTQ(s, t)$}

In this section, first we will give some lemmas, then determine $t_g(LeTQ(s, t))$ for $1\leq s\leq t$ and $0\leq g\leq s$ under the PMC model and MM$^*$ model, respectively.

%%%%%%%%%%%%

\begin{theorem}
\label{geq}
For $1\leq s\leq t$ and $0\leq g\leq s$, $t_g(LeTQ(s,t))\leq 2^g(s-g+2)-1$ under the PMC model and MM$^*$ model, respectively.
\end{theorem}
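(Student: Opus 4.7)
The plan is to exhibit two indistinguishable $g$-good-neighbor conditional faulty sets whose cardinalities are at most $2^g(s-g+2)$, which rules out $g$-good-neighbor conditional $2^g(s-g+2)$-diagnosability under both models.

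Concretely, I will reuse the construction from Lemma~\ref{g-neighbor}: let
$A=\{a_{s-1}a_{s-2}\cdots a_{s-g}\,0^{s-g+t+1}:a_i\in\{0,1\}\}$, set $F_1=N_V(A)$ and $F_2=N_V[A]=F_1\cup A$. Lemma~\ref{g-neighbor} already gives $|F_1|=2^g(s-g+1)$, $|F_2|=2^g(s-g+2)$, and asserts that $F_1$ is a $g$-good-neighbor conditional faulty set and $F_2$ a $\max\{s-1,g\}$-good-neighbor one. Since $0\le g\le s$ implies $g\le\max\{s-1,g\}$, both $F_1$ and $F_2$ are $g$-good-neighbor conditional faulty sets, with $|F_1|,|F_2|\le 2^g(s-g+2)$. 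Also $F_1\cap A=\emptyset$, so $F_1\subsetneq F_2$ and $F_1\triangle F_2 = A\neq\emptyset$.

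The key structural observation is that every neighbor of a vertex of $A$ lies in $F_2$: if $v\in A$ then $N_V(v)\subseteq A\cup N_V(A)=F_2$. Hence no vertex of $V-F_1-F_2=V-F_2$ has a neighbor in $A=F_1\triangle F_2$. Under the PMC model, Proposition~\ref{PMC} then forces $(F_1,F_2)$ to be indistinguishable, so $LeTQ(s,t)$ is not $g$-good-neighbor conditional $2^g(s-g+2)$-diagnosable, giving the upper bound.

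For the MM$^*$ model I will check the three conditions of Proposition~\ref{MM} all fail. Condition (2) is vacuous since $F_1-F_2=\emptyset$. For conditions (1) and (3) any witness $w\in V-F_1-F_2$ would have to be adjacent to some vertex of $A=F_2-F_1=F_1\triangle F_2$; but every such neighbor lies in $N_V(A)\cup A=F_2$, contradicting $w\notin F_2$. Thus $(F_1,F_2)$ is also indistinguishable under MM$^*$, yielding the same bound. I expect no serious obstacle: the main point is simply the containment $N_V(A)\cup A\subseteq F_2$, which collapses all three MM$^*$ conditions simultaneously, and the cardinality and degree computations have already been done in Lemma~\ref{g-neighbor}.
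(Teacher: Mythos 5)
Your proposal is correct and follows essentially the same route as the paper: it uses the identical construction $A$, $F_1=N_V(A)$, $F_2=N_V[A]$ from Lemma~\ref{g-neighbor} and the observation $N_V(F_1\triangle F_2)=N_V(A)\subseteq F_2$ to conclude indistinguishability via Propositions~\ref{PMC} and~\ref{MM}. The only difference is that you spell out the verification of the three MM$^*$ conditions, which the paper leaves implicit.
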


\begin{proof} Let  $A=\{a_{s-1}a_{s-2}\cdots a_{s-g} 0^{s-g+t+1}~|~a_i\in \{0,1\},~s-g\leq i\leq s-1\}\subset V$, and let $F_1=N_{V}(A)$, $F_2=N_{V}[A]$.
Then by Lemma~\ref{g-neighbor}, $|F_2|=2^g(s-g+2)$ and $F_i$ is a $g$-good neighbor conditional faulty set for $i=1,2$. Note that $F_1\triangle F_2=A$ and $N_{V}(A)=F_1$, and hence $(F_1, F_2)$ is an indistinguishable pair under the PMC model by Proposition~\ref{PMC}, and  under the MM$^*$ model by Proposition~\ref{MM}. From the definition of $t_g$, $t_g(LeTQ(s,t))\leq 2^g(s-g+2)-1$ under the PMC model and MM$^*$ model, respectively.
\end{proof}

%%%%%%%%%%%

\begin{lemma}
\label{two}
Let $F_1$ and $F_2$ be two distinct $g$-good-neighbor conditional faulty sets in $LeTQ(s,t)$ such that $|F_1|, |F_2| \leq 2^g(s-g+2)-1$. For $1\leq s\leq t$ and $0\leq g\leq s$, we have $F_1\cup F_2 \neq V$.
\end{lemma}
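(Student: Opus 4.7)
The plan is to prove the lemma by a simple counting argument, bounding $|F_1 \cup F_2|$ from above by $|F_1|+|F_2|$ and showing it is strictly less than $|V| = 2^{s+t+1}$. The $g$-good-neighbor hypothesis is not needed here beyond the given size bound; what matters is the inequality $|F_1|, |F_2| \leq 2^g(s-g+2)-1$.

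First, I would write
\[
|F_1 \cup F_2| \leq |F_1|+|F_2| \leq 2\bigl(2^g(s-g+2)-1\bigr) = 2^{g+1}(s-g+2) - 2.
\]
Next, to control the right-hand side as $g$ varies in $\{0,1,\dots,s\}$, I would study the function $f(g) = 2^g(s-g+2)$. A direct computation gives
\[
f(g+1) - f(g) = 2^{g+1}(s-g+1) - 2^g(s-g+2) = 2^g(s-g) \geq 0 \quad \text{for } 0 \leq g \leq s,
\]
so $f$ is non-decreasing on this range and attains its maximum at $g=s$, with value $f(s) = 2^s \cdot 2 = 2^{s+1}$. Hence
\[
|F_1 \cup F_2| \leq 2f(g) - 2 \leq 2^{s+2} - 2.
\]

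Finally, since $1 \leq s \leq t$ forces $t \geq 1$ and therefore $|V| = 2^{s+t+1} \geq 2^{s+2}$, we obtain
\[
|F_1 \cup F_2| \leq 2^{s+2} - 2 < 2^{s+2} \leq 2^{s+t+1} = |V|,
\]
which proves $F_1 \cup F_2 \neq V$.

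There is no real obstacle in this proof; the only subtlety is recognizing that a naive bound such as $2^{g+1}(s-g+2) \leq 2^{s+1}(s+2)$ is too weak when $t$ is close to $s$ (for example $s=t=2$, where $2^{s+1}(s+2)=32$ would not be less than $|V|=32$). Establishing the monotonicity of $f$ to pin the maximum at $g=s$ yields the cleaner bound $2^{s+2}-2$, which comfortably fits under $2^{s+t+1}$ for every admissible pair $(s,t)$.
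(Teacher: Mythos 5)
Your proof is correct and follows essentially the same route as the paper: both are pure counting arguments that bound $|F_1\cup F_2|\leq |F_1|+|F_2|\leq 2^{g+1}(s-g+2)-2$ and compare with $|V|=2^{s+t+1}$, the only difference being how the arithmetic inequality is verified (you maximize $2^g(s-g+2)$ over $g$ via monotonicity to get $2^{s+1}$, while the paper uses $s-g+2\leq 2^{s-g+t}$). Both yield the strict inequality $|F_1\cup F_2|<2^{s+t+1}$, so the argument is complete.
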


\begin{proof} Since $s-g\geq 0$ and $t\geq 1$, we have $2^{s-g+t}\geq (s-g+t)+1\geq s-g+2$. Then
$$|F_1\cup F_2|\leq |F_1|+|F_2|\leq 2^{g+1}(s-g+2)-2\leq 2^{g+1}2^{s-g+t}-2<2^{s+t+1}.$$ Thus $F_1\cup F_2 \neq V$.
\end{proof}

\subsection{$t_g(LeTQ(s,t))$ under the PMC model}

In this subsection, we will determine $t_g(LeTQ(s,t))$ for $1\leq s\leq t$ and $0\leq g\leq s$ under the PMC model.

\begin{lemma}
\label{leq1}
Let $1\leq s\leq t$ and $0\leq g\leq s$. For any two distinct $g$-good-neighbor conditional faulty sets $F_1$ and $F_2$ in $LeTQ(s,t)$ with $|F_1|, |F_2| \leq 2^g(s-g+2)-1$, $(F_1, F_2)$ is a distinguishable pair under the PMC model.
\end{lemma}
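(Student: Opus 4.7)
The plan is to proceed by contradiction: assume $(F_1,F_2)$ is an indistinguishable pair under the PMC model. By Proposition~\ref{PMC}, no edge of $LeTQ(s,t)$ joins $U:=V\setminus(F_1\cup F_2)$ to the symmetric difference $F_1\triangle F_2$. Lemma~\ref{two} guarantees $U\ne\emptyset$, and $F_1\triangle F_2\ne\emptyset$ since $F_1\ne F_2$; thus $F_1\cap F_2$ disconnects $U$ from $F_1\triangle F_2$ in $LeTQ(s,t)$ and is therefore a vertex-cut (possibly empty only if $LeTQ(s,t)$ were disconnected, which Theorem~\ref{g-cut} at $g=0$ rules out).

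The heart of the argument is to upgrade this cut to an $R^g$-vertex-cut by verifying $\delta(LeTQ(s,t)-(F_1\cap F_2))\ge g$. For any $v\in U\subseteq V\setminus F_1$, the $g$-good-neighbor condition on $F_1$ gives $v$ at least $g$ neighbors in $V\setminus F_1=U\cup(F_2\setminus F_1)$; since $v$ has no neighbor in $F_1\triangle F_2\supseteq F_2\setminus F_1$, these neighbors all lie in $U$. For any $v\in F_2\setminus F_1$, the same condition again yields $g$ neighbors in $V\setminus F_1$, and the absence of edges to $U$ forces all of them into $F_2\setminus F_1$. Symmetrically, for $v\in F_1\setminus F_2$, the $g$-good-neighbor condition on $F_2$ places at least $g$ neighbors inside $F_1\setminus F_2$. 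Hence Theorem~\ref{g-cut} yields $|F_1\cap F_2|\ge \kappa^g(LeTQ(s,t))=2^g(s-g+1)$.

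To close the argument, the very same case analysis shows that for whichever of $F_1\setminus F_2$ and $F_2\setminus F_1$ is nonempty, the induced subgraph has minimum degree at least $g$, so Proposition~\ref{LeTQ-neighbor} forces its size to be at least $2^g$. Assuming without loss of generality that $|F_2|\ge|F_1|$ and hence $F_2\setminus F_1\ne\emptyset$, one concludes
$$|F_2|=|F_1\cap F_2|+|F_2\setminus F_1|\ge 2^g(s-g+1)+2^g=2^g(s-g+2),$$
contradicting the hypothesis $|F_2|\le 2^g(s-g+2)-1$.

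The main obstacle is the uniform min-degree verification for $F_1\cap F_2$: one must carefully alternate between the $g$-good-neighbor conditions of $F_1$ and $F_2$ depending on which side of the symmetric difference a given vertex lies in, and in each case use the no-edge condition to rule out exactly the neighbors on the opposite side. Once that bookkeeping is done, combining Theorem~\ref{g-cut} (applied to $F_1\cap F_2$) with Proposition~\ref{LeTQ-neighbor} (applied to $F_2\setminus F_1$) cleanly produces the cardinality contradiction.
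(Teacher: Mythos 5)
Your proposal is correct and follows essentially the same route as the paper: assume indistinguishability, use Proposition~\ref{PMC} and Lemma~\ref{two} to show $F_1\cap F_2$ is an $R^g$-vertex-cut, invoke Theorem~\ref{g-cut} for $|F_1\cap F_2|\ge 2^g(s-g+1)$, then use $\delta(LeTQ(s,t)[F_2\setminus F_1])\ge g$ with Proposition~\ref{LeTQ-neighbor} to get $|F_2\setminus F_1|\ge 2^g$ and the cardinality contradiction. The only cosmetic difference is that the paper notes directly that the intersection of two $g$-good-neighbor faulty sets is again one (a neighbor outside $F_1$ is automatically outside $F_1\cap F_2$), whereas you re-derive this via the no-edge condition; both are valid.
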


\begin{proof}
Suppose that $(F_1, F_2)$ is an indistinguishable pair. Assume, without loss of generality, that $F_2-F_1\neq \emptyset$.
By Lemma~\ref{two}, we have $F_1\cup F_2 \neq V$. Since $F_1\neq F_2$, $F_1\triangle F_2\neq \emptyset$. By Proposition~\ref{PMC}, $E_{LeTQ(s,t)}(F_1\triangle F_2,V-F_1-F_2)=\emptyset$. Since  $LeTQ(s,t)$ is connected, $F_1\cap F_2\neq \emptyset$,  and thus $F_1\cap F_2$ is a vertex cut of $LeTQ(s,t)$. Since $F_1$ and $F_2$ are $g$-good-neighbor conditional faulty sets of $LeTQ(s,t)$, $F_1\cap F_2$ is also a $g$-good-neighbor conditional faulty set, which implies $F_1\cap F_2$ is a $R^g$-vertex-cut of $LeTQ(s,t)$. By Theorem~\ref{g-cut}, $|F_1\cap F_2|\geq 2^g(s-g+1)$.

Since $F_1$ is a $g$-good-neighbor conditional faulty set, all the vertices in $F_2-F_1$ have at least $g$ neighbors out of $F_1$. By  Proposition~\ref{PMC}, $E_{LeTQ(s,t)}(F_1\triangle F_2,V-F_1-F_2)=\emptyset$, then $\delta(LeTQ(s,t)[F_2-F_1])\geq g$. Thus $|F_2-F_1|\geq 2^g$ by Proposition~\ref{LeTQ-neighbor}. So
$$2^g(s-g+2)-1\geq |F_2| = |F_1\cap F_2| + |F_2-F_1| \geq 2^g(s-g+1)+2^g=2^g(s-g+2),$$
a contradiction.
\end{proof}

By Theorem~\ref{geq} and Lemma~\ref{leq1}, we get the $g$-good neighbor of conditional diagnosability of $LeTQ(s,t)$ under the PMC model for $1\leq s\leq t$ and $0\leq g\leq s$.

\begin{theorem}
\label{PMC1}
For $1\leq s\leq t$ and $0\leq g\leq s$, $t_g(LeTQ(s,t))=2^g(s-g+2)-1$ under the PMC model.
\end{theorem}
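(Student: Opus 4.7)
The plan is to derive the theorem as an immediate consequence of the two results already in place: Theorem~\ref{geq} for the upper bound, and Lemma~\ref{leq1} for the lower bound. Since the value $2^g(s-g+2)-1$ appears on both sides, the task reduces to verifying that these two bounds fit together via the definition of $t_g$.

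First I would quote Theorem~\ref{geq}, which shows via the explicit construction $A=\{a_{s-1}\cdots a_{s-g}0^{s-g+t+1}\}$, $F_1 = N_V(A)$, $F_2 = N_V[A]$ that there exists an indistinguishable pair of $g$-good-neighbor conditional faulty sets with $|F_2| = 2^g(s-g+2)$. By the definition of $g$-good-neighbor conditional diagnosability, this rules out the system being $g$-good-neighbor conditionally $2^g(s-g+2)$-diagnosable, and so $t_g(LeTQ(s,t)) \le 2^g(s-g+2)-1$.

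For the reverse inequality, I would invoke Lemma~\ref{leq1}: any two distinct $g$-good-neighbor conditional faulty sets $F_1,F_2$ with $|F_1|,|F_2|\le 2^g(s-g+2)-1$ form a distinguishable pair under the PMC model. By Proposition~\ref{diagnosable} (together with the $g$-good-neighbor conditional refinement encoded in the definition), this means $LeTQ(s,t)$ is $g$-good-neighbor conditionally $(2^g(s-g+2)-1)$-diagnosable, giving $t_g(LeTQ(s,t)) \ge 2^g(s-g+2)-1$.

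Combining the two inequalities yields the equality $t_g(LeTQ(s,t)) = 2^g(s-g+2)-1$. There is no genuine obstacle here, since the substantive work, namely the structural argument against an indistinguishable pair of that size (using Theorem~\ref{g-cut} to force $|F_1\cap F_2|\ge 2^g(s-g+1)$ and Proposition~\ref{LeTQ-neighbor} to force $|F_2-F_1|\ge 2^g$, producing the numerical contradiction), has already been carried out in Lemma~\ref{leq1}.
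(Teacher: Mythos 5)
Your proposal is correct and matches the paper's own argument: the paper derives Theorem~\ref{PMC1} exactly by combining the upper bound of Theorem~\ref{geq} with the distinguishability result of Lemma~\ref{leq1}. Nothing is missing, since the substantive counting argument is indeed contained in Lemma~\ref{leq1} as you note.
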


\subsection{$t_g(LeTQ(s,t))$ under the MM$^*$ model}

In this subsection, we will determine $t_g(LeTQ(s,t))$ for $1\leq s\leq t$ and $0\leq g\leq s$ under the MM$^*$ model.

\begin{lemma}
\label{F1capF2}
For any two distinct faulty sets $F_1,F_2$ in $LeTQ(s,t)$ with $F_1\cup F_2 \neq V$, if $(F_1,F_2)$ is an indistinguishable pair under the MM$^*$ model and $LeTQ(s,t)-F_1-F_2$ has no isolated vertices, then $F_1\cap F_2\neq \emptyset$.
\end{lemma}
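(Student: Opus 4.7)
\medskip
\noindent\textbf{Proof plan for Lemma~\ref{F1capF2}.}

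The plan is to argue by contradiction: assume $F_1\cap F_2=\emptyset$ and derive a contradiction by exhibiting a nonempty, proper vertex subset of $LeTQ(s,t)$ that has no edges to its complement, which is impossible since $LeTQ(s,t)$ is connected.

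First I would set up the sets. Put $V' := V - F_1 - F_2$. The hypothesis $F_1\cup F_2\ne V$ gives $V'\ne\emptyset$. The hypothesis $F_1\ne F_2$, combined with the assumption $F_1\cap F_2=\emptyset$, gives $F_1\cup F_2\ne\emptyset$ and moreover $F_1\triangle F_2=F_1\cup F_2$.

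Next I would invoke the failure of the three conditions in Proposition~\ref{MM}, but really only Condition~(1) is needed. Since $(F_1,F_2)$ is an indistinguishable pair, Condition~(1) fails, so there do not exist $u,w\in V'$ and $v\in F_1\triangle F_2$ with $uw,vw\in E(LeTQ(s,t))$. Fix any $w\in V'$. By the no-isolated-vertex hypothesis, $w$ has at least one neighbor $u\in V'$. The failure of Condition~(1) then forces $w$ to have no neighbor in $F_1\triangle F_2=F_1\cup F_2$. Running this over all $w\in V'$ shows that $E_{LeTQ(s,t)}(V',\,F_1\cup F_2)=\emptyset$.

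Finally, since $V'$ and $F_1\cup F_2$ are both nonempty and partition $V$, this contradicts the fact that $LeTQ(s,t)$ is connected. Hence $F_1\cap F_2\ne\emptyset$. I do not expect any real obstacle here; the lemma is a direct consequence of Proposition~\ref{MM}(1) together with the connectedness of $LeTQ(s,t)$, and the only thing to be careful about is recording that the no-isolated-vertex assumption supplies the neighbor $u\in V'$ needed to activate Condition~(1).
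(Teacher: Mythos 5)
Your proposal is correct and follows essentially the same route as the paper: under the assumption $F_1\cap F_2=\emptyset$, use the no-isolated-vertex hypothesis to supply the neighbor needed in Proposition~\ref{MM}(1), conclude $E_{LeTQ(s,t)}(F_1\cup F_2,\,V-F_1-F_2)=\emptyset$, and contradict the connectedness of $LeTQ(s,t)$. Your write-up is just slightly more explicit (noting $F_1\triangle F_2=F_1\cup F_2$ when the sets are disjoint), but there is no substantive difference.
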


\begin{proof} Suppose to the contrary that $F_1\cap F_2=\emptyset$. Let $H$ be the component of $LeTQ(s,t)-F_1-F_2$, then $\delta(H)\ge 1$. By Proposition~\ref{MM}(1), $E_{LeTQ(s,t)}(F_i,V(H))=\emptyset$ for $i=1,2$ as $(F_1,F_2)$ is an indistinguishable pair. So $E_{LeTQ(s,t)}(F_1\cup F_2,V-F_1-F_2)=\emptyset$, i.e., $LeTQ(s,t)$ is disconnected, a contradiction. \end{proof}

For $g=0$,  the $0$-good-neighbor condition does not have any restriction on the faulty sets in this case, then $t_0(LeTQ(s,t))=t(LeTQ(s,t))$.

\begin{theorem}
\label{s=t=1}
For $0\leq g\leq 1$, $t_g(LeTQ(1,1))=1$ under the MM$^*$ model.
\end{theorem}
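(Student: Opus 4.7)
The plan is to establish the two inequalities $t_g(LeTQ(1,1))\le 1$ and $t_g(LeTQ(1,1))\ge 1$ separately for each $g\in\{0,1\}$. As already observed in the proof of Proposition~\ref{common neighbor}, $LeTQ(1,1)$ is isomorphic to an $8$-cycle, so I would label its vertices $v_0,v_1,\ldots,v_7$ cyclically with $v_iv_{i+1}\in E$ (indices mod $8$). Note that Theorem~\ref{geq} alone only gives $t_g(LeTQ(1,1))\le 2^g(3-g)-1$, which equals $2$ for $g=0$ and $3$ for $g=1$; neither is tight, so a sharper witness pair must be produced directly.

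For the upper bound I would take $F_1=\{v_0,v_4\}$ and $F_2=\{v_2,v_6\}$, i.e.\ two antipodal pairs offset by $2$. Removing either set from $C_8$ leaves two disjoint paths of length $2$, so $\delta(LeTQ(1,1)-F_i)\ge 1$; hence both $F_1,F_2$ are $1$-good-neighbor (and a fortiori $0$-good-neighbor) conditional faulty sets of size $2$. To verify $(F_1,F_2)$ is indistinguishable under the MM$^*$ model I would check that all three alternatives of Proposition~\ref{MM} fail. First, $V-F_1-F_2=\{v_1,v_3,v_5,v_7\}$ is an independent set in $C_8$, so no vertex in this set has a neighbor in $V-F_1-F_2$; this kills condition~(1). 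Second, the two elements of $F_1-F_2=F_1$ are at cycle-distance~$4$ and share no common neighbor, so condition~(2) fails; condition~(3) fails by the identical argument applied to $F_2-F_1=F_2$. Consequently $LeTQ(1,1)$ is not $g$-good-neighbor conditionally $2$-diagnosable, giving $t_g(LeTQ(1,1))\le 1$.

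For the lower bound I would argue that any two distinct $g$-good-neighbor conditional faulty sets $F_1\ne F_2$ with $|F_1|,|F_2|\le 1$ are distinguishable. Any subset of $C_8$ of size at most one is automatically a $1$-good-neighbor (hence $0$-good-neighbor) conditional faulty set, so the good-neighbor hypothesis adds nothing here. Fix any $v\in F_1\triangle F_2$; since $|F_1\cup F_2|\le 2$ and $v$ has two neighbors in $C_8$, I would choose a neighbor $w$ of $v$ lying in $V-F_1-F_2$ whose other neighbor $u$ also lies in $V-F_1-F_2$, which realizes condition~(1) of Proposition~\ref{MM}. The only configuration needing a brief extra check is $F_1=\{u\}$, $F_2=\{v\}$ with $u,v$ at cycle-distance $2$, where their common neighbor is isolated in $V-F_1-F_2$; in that case the neighbor of $u$ away from $v$ still supplies a valid witness because $C_8$ has eight vertices and $F_1\cup F_2$ has only two.

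The principal obstacle is simply identifying the right indistinguishable pair: the generic construction from Lemma~\ref{g-neighbor} produces only $(N_V(A),N_V[A])$ of sizes $(2,3)$ or $(2,4)$ here, far too large to force $t_g\le 1$. The right witness is the antipodal configuration $(\{v_0,v_4\},\{v_2,v_6\})$, whose complement $V-F_1-F_2$ consists entirely of mutually non-adjacent vertices. This simultaneously defeats all three MM$^*$ distinguishing conditions despite $F_1\cap F_2=\emptyset$, so that Lemma~\ref{F1capF2} cannot help (its hypothesis that $V-F_1-F_2$ be free of isolated vertices is exactly what fails here).
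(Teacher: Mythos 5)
Your proof is correct and takes essentially the same route as the paper: your antipodal witness pair $(\{v_0,v_4\},\{v_2,v_6\})$ is, up to labeling, exactly the paper's $F_1'=\{000,110\}$, $F_2'=\{101,011\}$ on the $8$-cycle, and your lower bound is the same verification that any two distinct faulty sets of size at most one are distinguishable (which the paper dismisses as ``easy to check'' but you spell out, correctly).
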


\begin{proof}
For $s=t=1$, $LeTQ(1,1)$ is a 8-cycle (see Figure 1).  Let $F'_1=\{000,110\}$ and $F'_2=\{101,011\}$. Then $|F'_i|=2$ and $\delta(LeTQ(1,1)-F'_i)=1$ for $1\leq i\leq 2$. By Proposition~\ref{MM}, $(F'_1,F'_2)$ is an indistinguishable pair. Thus $t_g(LeTQ(1,1))\leq 1$ by the definition of $t_g(LeTQ(1,1))$. On the other hand, it is easy to check that for any two distinct faulty sets $F'_1,F'_2$ in $LeTQ(1,1)$ with $|F'_1|=|F'_2|=1$, $(F_1,F_2)$ is a distinguishable pair. Then $t_g(LeTQ(1,1))\geq 1$. Hence $t_g(LeTQ(1,1))=1$.
\end{proof}

\begin{theorem}
\label{g=0}
For $1\leq s\leq t$ with $s+t\geq 3$, $t_0(LeTQ(s,t))=s+1$ under the MM$^*$ model.
\end{theorem}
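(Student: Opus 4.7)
The goal is to prove both $t_0(LeTQ(s,t))\le s+1$ and $t_0(LeTQ(s,t))\ge s+1$. For the upper bound, I will exhibit an indistinguishable pair: picking any $v\in L$ (so $d(v)=s+1$ by Proposition~\ref{degree}) and setting $F_1=N_V(v)$, $F_2=N_V[v]$ gives $|F_1|=s+1$, $|F_2|=s+2$, and $F_1\triangle F_2=\{v\}$. All neighbors of $v$ sit inside $F_1\cap F_2$, so condition~(1) of Proposition~\ref{MM} fails (no neighbor of $v$ lies in $V-F_1-F_2$), while conditions~(2) and~(3) fail by cardinality; hence $(F_1,F_2)$ is an indistinguishable pair and the definition of $t_0$ yields the upper bound.

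For the lower bound, I suppose for contradiction that some indistinguishable pair $(F_1,F_2)$ exists with $|F_1|,|F_2|\le s+1$, $F_1\ne F_2$, and (without loss of generality) $F_2-F_1\ne\emptyset$. Lemma~\ref{two} at $g=0$ gives $V-F_1-F_2\ne\emptyset$. I then dichotomize on whether $LeTQ(s,t)-F_1-F_2$ has an isolated vertex. If not, every $w\in V-F_1-F_2$ has a neighbor in $V-F_1-F_2$, so the failure of condition~(1) of Proposition~\ref{MM} forces $N(w)\cap(F_1\triangle F_2)=\emptyset$. Hence $F_1\cap F_2$ separates $F_1\triangle F_2$ from $V-F_1-F_2$ and is a vertex cut of $LeTQ(s,t)$, so by Theorem~\ref{g-cut} at $g=0$, $|F_1\cap F_2|\ge s+1$; combined with $|F_i|\le s+1$ this forces $F_1=F_2$, a contradiction.

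The harder case is when some $w\in V-F_1-F_2$ is isolated in $LeTQ(s,t)-F_1-F_2$. Then $N(w)\subseteq F_1\cup F_2$, and the failure of conditions~(2), (3) gives $|N(w)\cap(F_1-F_2)|,|N(w)\cap(F_2-F_1)|\le 1$, hence $|N(w)\cap(F_1\cap F_2)|\ge s-1$. I then split on whether $F_1\subseteq F_2$ (or by symmetry $F_2\subseteq F_1$) versus both $F_1-F_2$ and $F_2-F_1$ non-empty. In the former, writing $F_2=F_1\cup\{v\}$, a degree count using $|N(w)|\ge s+1$ and $|F_2|\le s+1$ forces $N(w)=F_2$ and in particular $w\sim v$. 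Condition~(1) applied at $v$ then forces any further neighbor of $v$ in $V-F_1-F_2$ to likewise be isolated with $N(\cdot)=F_2$, which for $s\ge 2$ contradicts the no-$K_{2,3}$ part of Proposition~\ref{common neighbor}; the residual case $s=1$ is ruled out using the cluster structure from Proposition~\ref{LR1}. It then follows that $N(v)=F_1\cup\{w\}$, so $v$ and $w$ share exactly $|F_1|=s$ common neighbors, contradicting the no-$K_{2,3}$ property when $s\ge 3$ and the no-triangle property (via the edge $vw$ together with any shared neighbor) when $s\le 2$. The complementary subcase (both $F_1-F_2$ and $F_2-F_1$ non-empty) follows by an analogous but slightly more involved analysis: picking $v_1\in F_1-F_2$, each neighbor of $v_1$ in $V-F_1-F_2$ is forced to be isolated with neighborhood contained in $(F_1\cap F_2)\cup\{v_1,v_2\}$ for some $v_2\in F_2-F_1$, and the same common-neighbor arithmetic collapses onto configurations excluded by Proposition~\ref{common neighbor}.

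The main obstacle is the isolated-vertex case, where the vertex-connectivity bound from Theorem~\ref{g-cut} is no longer directly available. The slack of only two in the inequality $|N(w)\cap(F_1\cap F_2)|\ge s-1$ means one has to track neighborhoods precisely and repeatedly invoke the no-triangle and no-$K_{2,3}$ constraints of Proposition~\ref{common neighbor}; for small values of $s$ the degree split between $L$ and $R$ together with the explicit Class-$0$/Class-$1$ cluster structure of Proposition~\ref{LR1} must be used to eliminate the borderline configurations that the cardinality bounds alone do not kill.
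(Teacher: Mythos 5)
Your upper bound and your treatment of the case with no isolated vertices are correct and coincide with the paper's argument (exhibit $F_1=N_V(v)$, $F_2=N_V[v]$ for $v\in L$; then use Proposition~\ref{MM}(1) plus Theorem~\ref{g-cut} with $g=0$ to force $F_1=F_2$). Your nested subcase $F_1\subseteq F_2$ of the isolated-vertex case also closes (the forced equalities $N(w)=F_2$, $|F_2-F_1|=1$, $N(v)=F_1\cup\{w\}$ do yield a triangle or a $K_{2,3}$, and the $s=1$ leftover dies on the cross-edge matching). The genuine gap is the subcase where both $F_1-F_2$ and $F_2-F_1$ are nonempty, which you dismiss in one sentence as ``the same common-neighbor arithmetic.'' It is not the same: there the only size information is $|F_1\cap F_2|\leq s$, an isolated vertex $w$ has at least $s-1$ neighbors in $F_1\cap F_2$, and two such isolated vertices hanging on $v_1\in F_1-F_2$ share only about $s-2$ neighbors of $F_1\cap F_2$ (plus $v_1$), so Proposition~\ref{common neighbor} gives a contradiction only when $s$ is at least about $4$. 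For $s\in\{1,2,3\}$ nothing collapses automatically: $F_1\cap F_2$ may even be empty, the differences may have size two, $w$ may be adjacent to one or both of $v_1,v_2$, and the degree dichotomy between $L$ and $R$ (including $t=s$ versus $t>s$) produces configurations that are only excluded by a detailed structural analysis you have not carried out. As it stands this subcase is asserted, not proved.

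For comparison, the paper avoids this entirely by arguing globally about the set $W$ of isolated vertices and the subgraph $H$ on $V-F_1-F_2-W$. If $V(H)\neq\emptyset$, Proposition~\ref{MM}(1) gives $E(F_1\triangle F_2,V(H))=\emptyset$, so $F_1\cap F_2$ is a vertex cut, whence $|F_1\cap F_2|\geq s+1$ by Theorem~\ref{g-cut} and $F_1=F_2$, regardless of how the differences are arranged. If $V(H)=\emptyset$, then $|W|=2^{s+t+1}-|F_1\cup F_2|>2(t+1)$, while every vertex of $W$ sends all of its at least $s+1$ edges into $F_1\cup F_2$, so $(s+1)|W|\leq\sum_{v\in F_1\cup F_2}d_V(v)\leq 2(s+1)(t+1)$, a contradiction. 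This counting step is exactly what replaces your unfinished case analysis; either adopt it (it subsumes your nested subcase too) or write out the both-differences-nonempty configurations for $s\leq 3$ in full.
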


\begin{proof}
First we show that $t_0(LeTQ(s,t))\leq s+1$. Let $v\in V(L)$, $F'_1=N_{V}(v)$ and $F'_2=N_{V}[v]$. Note that $F'_1\triangle F'_2=\{v\}$, then $(F'_1,F'_2)$ is an indistinguishable pair by Proposition~\ref{MM}. Since $|F'_1|=s+1$ and $|F'_2|=s+2$, $t_0(LeTQ(s,t))\leq s+1$ by the definition of $t_0(LeTQ(s,t))$.

Now we show that $t_0(LeTQ(s,t)\geq s+1$. That is, for any two distinct faulty sets $F_1,F_2$ in $LeTQ(s,t)$ with $|F_1|,|F_2|\leq s+1$, $(F_1,F_2)$ is a distinguishable pair. Suppose to the contrary that  $(F_1,F_2)$ is an indistinguishable pair. Note that $|F_1\cup F_2|\leq |F_1|+|F_2|\leq 2(s+1)<2(s+t+1)<2^{s+t+1}$, and hence $F_1\cup F_2 \neq V$.

\vskip 0.2cm
\noindent{\bf Claim 1.} $LeTQ(s,t)-F_1-F_2$ has no isolated vertices.
\vskip 0.2cm

\vskip 0.2cm\noindent{\bf Proof of Claim 1.} Let $W$ be the set of isolated vertices in $LeTQ(s,t)-F_1-F_2$, and $H$ the subgraph induced by the vertex set $V-F_1-F_2-W$. Suppose to the contrary that $W\neq \emptyset$ and let $w\in W$.

If $V(H)\neq \emptyset$, then $E_{LeTQ(s,t)}(F_1\triangle F_2, V(H))=\emptyset$ by Proposition~\ref{MM}(1). Since $LeTQ(s,t)$ is connected, then $F_1\cap F_2\neq \emptyset$ and thus $F_1\cap F_2$ is a vertex-cut of $LeTQ(s,t)$. By Theorem~\ref{g-cut},
$|F_1\cap F_2|\geq s+1$. That is $|F_1|=|F_2|=s+1$ and $F_1=F_2$, a contradiction. Therefore $V(H)=\emptyset$.

Note that $|W|=|V|-|F_1\cup F_2|\geq 2^{s+t+1}-2(s+1)>2(t+1)$. Thus
$$2(s+1)(t+1)< (s+1)|W|\leq \sum_{w\in W}d_{V}(w)\leq \sum_{v\in F_1\cup F_2}d_{V}(v)\leq 2(s+1)(t+1),$$ a contradiction. \q

By Claim 1, $LeTQ(s,t)-F_1-F_2$ has no isolated vertices.
Then $E_{LeTQ(s,t)}(F_1\triangle F_2,V-F_1-F_2)=\emptyset$ by Proposition~\ref{MM}(1). Since  $LeTQ(s,t)$ is connected, $F_1\cap F_2\neq \emptyset$,  and thus $F_1\cap F_2$ is a vertex cut of $LeTQ(s,t)$. By Theorem~\ref{g-cut}, $|F_1\cap F_2|\geq s+1$.
Note that $|F_1|,|F_2|\leq s+1$, then $|F_1|=|F_2|=s+1$ and $F_1=F_2$, a contradiction.

Therefore, we complete the proof of Theorem~\ref{g=0}.
\end{proof}

\begin{figure}[!htb]
\centering
{\includegraphics[height=0.25\textwidth]{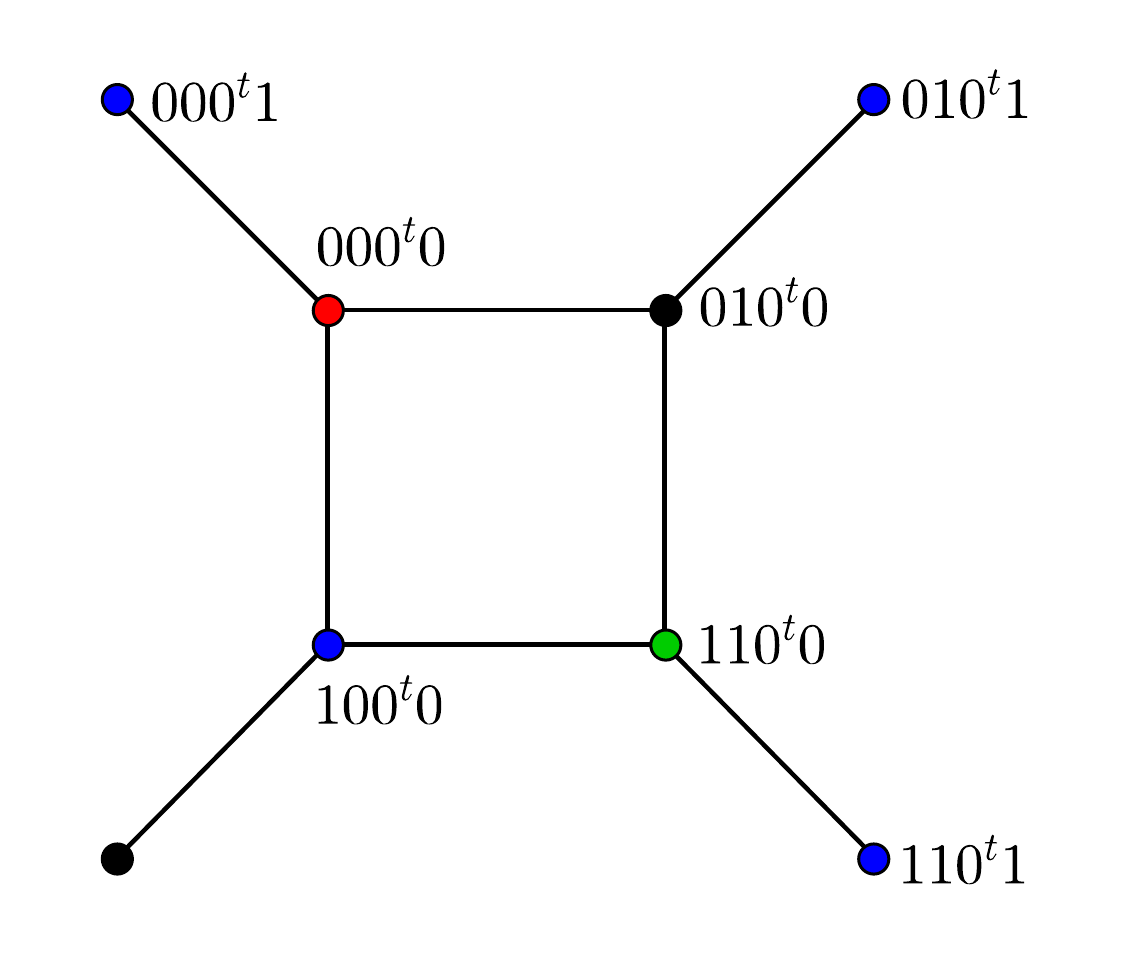}}

Figure 4 An indistinguishable pair $(F_1',F_2')$
\end{figure}

\begin{theorem}
\label{s=2}
For $t\geq 2$, $t_1(LeTQ(2,t))=4$ under the MM$^*$ model.
\end{theorem}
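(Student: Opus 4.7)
The statement $t_1(LeTQ(2,t)) = 4$ for $t \geq 2$ is one smaller than the corresponding PMC value $2^1(2-1+2)-1 = 5$ from Theorem~\ref{PMC1}, reflecting that the MM$^*$ model admits an ``isolated-vertex'' indistinguishability configuration ruled out by PMC. The plan is to prove the two bounds separately.

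For the upper bound $t_1 \leq 4$, I would exhibit an indistinguishable MM$^*$ pair of $1$-good-neighbor conditional faulty sets each of size $5$. Fix a Class-$0$ cluster $L_i \cong LTQ_2 \cong C_4$, list its vertices cyclically as $x_1, x_2, x_3, x_4$, and let $y_j = x_j^*$ be their cross-partners, which by Proposition~\ref{LR2} sit in four distinct Class-$1$ clusters. Put $F_1' = \{y_1, y_2, y_3, y_4, x_1\}$ and $F_2' = \{y_1, y_2, y_3, y_4, x_3\}$, so that $F_1' \triangle F_2' = \{x_1, x_3\}$. The only neighbors of $\{x_1, x_3\}$ in $V - F_1' - F_2'$ are $x_2$ and $x_4$, and each of those has its entire $3$-neighborhood inside $F_1' \cup F_2'$; hence MM$^*$ condition~(1) fails, while conditions (2), (3) fail trivially because $|F_j' - F_{3-j}'| = 1$. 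A direct neighborhood check---the vertices affected are $x_2, x_3, x_4$ and the Class-$1$-cluster neighbors of the $y_j$---confirms $\delta(LeTQ(2,t) - F_j') \geq 1$, so both $F_j'$ are $1$-good-neighbor, giving $t_1 \leq 5 - 1 = 4$.

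For the lower bound $t_1 \geq 4$, assume towards contradiction that distinct $1$-good-neighbor sets $F_1, F_2$ with $|F_1|, |F_2| \leq 4$ form an indistinguishable MM$^*$ pair, and let $I$ be the isolated vertices of $LeTQ(2,t)[V - F_1 - F_2]$. By Lemma~\ref{two} we have $F_1 \cup F_2 \neq V$. If $I = \emptyset$, then failure of MM$^*$ condition (1) forces $E(F_1 \triangle F_2, V - F_1 - F_2) = \emptyset$; by Lemma~\ref{F1capF2}, $F_1 \cap F_2 \neq \emptyset$ and it is an $R^1$-vertex-cut, so Theorem~\ref{g-cut} gives $|F_1 \cap F_2| \geq \kappa^1(LeTQ(2,t)) = 4$. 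Moreover, the $1$-good-neighbor condition on $F_{3-i}$ forces $\delta(LeTQ[F_i - F_{3-i}]) \geq 1$ (every ``good neighbor'' of a vertex in $F_i - F_{3-i}$ must lie inside $F_i - F_{3-i}$ itself), whence $|F_i - F_{3-i}| \geq 2$ by Proposition~\ref{LeTQ-neighbor}, giving $|F_i| \geq 6$, a contradiction. Hence $I \neq \emptyset$; pick $w \in I$. The $1$-good-neighbor hypothesis together with failure of conditions (2), (3) then forces $w$ to have exactly one neighbor in $F_1 - F_2$, exactly one in $F_2 - F_1$, and $\deg(w) - 2$ in $F_1 \cap F_2$.

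The main obstacle is this isolated-vertex case. I would split according to whether $w$ lies in $L$ (so $\deg(w) = 3$) or $R$ (so $\deg(w) = t+1$), and further on which of $w$'s three neighbor types---the two $C_4$-cluster partners or the cross-partner---lies in $F_1 \cap F_2$. In each branch I would follow the chain of forced memberships: when $w \in L$, the cluster vertex $w'$ diagonally opposite $w$ in its $C_4$-cluster inherits neighbors in $F_1 \triangle F_2$, so condition (1) failure makes $w'$ isolated too and pins $(w')^* \in F_1 \cup F_2$; separately, the cross-partner $a^*$ of $w$'s $F_1 - F_2$-neighbor $a$, if it lies in $V - F_1 - F_2$, must likewise be isolated, which by the same local count drags its entire Class-$1$-cluster neighborhood of size $t$ into $F_1 \cup F_2$. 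Iterating these forcings drives $|F_1 \cup F_2|$ past the budget $|F_1| + |F_2| \leq 8$. The most delicate sub-case is $t = 2$, where Class-$1$ clusters are themselves only $4$-vertex copies of $LTQ_2$ and the contradiction must be squeezed out of a careful simultaneous accounting on $F_1 \cap F_2$ and $F_1 \triangle F_2$ rather than from sheer cluster-size overflow.
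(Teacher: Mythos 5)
Your upper bound is fine and is essentially the paper's own construction: your $\{y_1,\dots,y_4,x_1\}$ versus $\{y_1,\dots,y_4,x_3\}$ (with $x_1,x_3$ the antipodal pair of a Class-$0$ cluster $C_4$ and $y_j=x_j^*$) is exactly the pair $F_1'=\{000^t0,000^t1,110^t1,100^t1,010^t1\}$, $F_2'=\{110^t0,000^t1,110^t1,100^t1,010^t1\}$ used in the paper, and your verification of indistinguishability and of the $1$-good-neighbor property is correct. The case $I=\emptyset$ of your lower bound is also sound (once $E(F_1\triangle F_2,V-F_1-F_2)=\emptyset$ is noted, $F_1\cap F_2$ is an $R^1$-vertex-cut, so $|F_1\cap F_2|\ge\kappa^1(LeTQ(2,t))=4$, which already forces $F_1=F_2$ since $|F_i|\le 4$; you do not even need the extra $|F_i-F_{3-i}|\ge 2$ step).

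The genuine gap is the case $I\neq\emptyset$, which is the heart of the theorem and which you only sketch. You correctly pin down that an isolated $w$ has exactly one neighbor in $F_1-F_2$, one in $F_2-F_1$ and the rest in $F_1\cap F_2$, but the proposed ``chain of forced memberships'' is never carried out: you do not show that iterating the forcing actually exceeds the budget $|F_1\cup F_2|\le 8$, and you explicitly concede that for $t=2$ the overflow argument fails and some unspecified ``careful simultaneous accounting'' would be needed. As stated, this is a plan, not a proof, and it is not clear it closes. The paper avoids this local case analysis entirely: with $W$ the set of isolated vertices and $H$ the rest of $LeTQ(2,t)-F_1-F_2$, it splits on whether $V(H)=\emptyset$. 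If $V(H)=\emptyset$, the observation that every $w\in W$ has a neighbor in $F_1\cap F_2$ gives $|W|\le (t+1)|F_1\cap F_2|\le 3(t+1)$, which contradicts $|V|=2^{t+3}$ together with $|F_1\cup F_2|\le 8$ (a global counting step absent from your sketch). If $V(H)\neq\emptyset$, failure of MM$^*$ condition (1) gives $E(F_1\triangle F_2,V(H))=\emptyset$, so $F_1\cap F_2$ separates $H$ from $W\cup(F_1\triangle F_2)$ and is again an $R^1$-vertex-cut; Theorem~\ref{g-cut} then yields $|F_1\cap F_2|\ge 4$ and hence $F_1=F_2$, a contradiction. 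To complete your argument you would either have to supply the full forcing analysis (including $t=2$) or replace it by this counting-plus-connectivity argument.
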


\begin{proof}
First we show that $t_1(LeTQ(2,t))\leq 4$. Let $F_1'=\{000^t0,000^t1,110^t1,100^t1,010^t1\}$ and $F_2'=\{110^t0,000^t1,110^t1,100^t1,010^t1\}$  (see Figure 4). Then $F_1'\triangle F_2'=\{000^t0,110^t0\}$. It is easy to certify that $\delta(LeTQ(s,t)-F'_i)=1$ for $1\leq i\leq 2$ and $(F_1',F_2')$ is an indistinguishable pair, thus $t_1(LeTQ(2,t))\leq 4$ by the definition of $t_1(LeTQ(2,t))$.

Now we show that $t_1(LeTQ(2,t))\geq 4$. That is, for any two distinct 1-good-neighbor conditional faulty sets $F_1,F_2$ in $LeTQ(2,t)$ with $|F_1|,|F_2|\leq 4$, $(F_1,F_2)$ is a distinguishable pair. Suppose to the contrary that  $(F_1,F_2)$ is an indistinguishable pair.

\vskip 0.2cm
\noindent{\bf Claim 2.} $LeTQ(2,t)-F_1-F_2$ has no isolated vertices.
\vskip 0.2cm

\noindent{\bf Proof of Claim 2.} Let $W$ be the set of isolated vertices in $LeTQ(2,t)-F_1-F_2$, and $H$ the subgraph induced by the vertex set $V-F_1-F_2-W$. We will show that $W=\emptyset$. Suppose to the contrary that $W\neq \emptyset$ and let $w\in W$. Then $N_{V-F_1}(w)\subseteq F_2-F_1$.

Since $F_1$ is a 1-good-neighbor conditional faulty set, $w$ must have at least one neighbor out of $F_1$. Note that $F_1$ and $F_2$ are indistinguishable, and hence $|N_{V-F_1}(w)| =1$ by Proposition~\ref{MM}(3). Similarly, $N_{V-F_2}(w)\subseteq F_1-F_2$ and $|N_{V-F_2}(w)| =1$. Therefore $|N_{F_1\cap F_2}(w)| \ge 1$ by Proposition~\ref{degree}. So we have
\begin{eqnarray*}|W|&\leq& \sum_{w\in W}|N_{F_1\cap F_2}(w)|\leq \sum_{v\in F_1\cap F_2}d_{V}(v)\leq (t+1)|F_1\cap F_2|\\
&\leq& (t+1)(|F_1|-1)\leq 3(t+1).
\end{eqnarray*}

If $V(H)=\emptyset$, then $|V |=|F_1 \cup F_2|+|W|$. Note that $2^{t+3}>4(t+3)$, then
\begin{eqnarray*}8&\geq& |F_1|+|F_2|=|F_1 \cup F_2|+|F_1 \cap F_2|=|V |-|W|+|F_1 \cap F_2|\\ &\geq& 2^{3+t}-(3t+4)+1>4(t+3)-3t-4+1=t+9,
\end{eqnarray*}
a contradiction.

So $V(H)\neq \emptyset$. Since $F_1$ and $F_2$ are indistinguishable, $E_{LeTQ(2,t)}(F_1\triangle F_2,V(H))=\emptyset$ by Proposition~\ref{MM}(1). Since  $LeTQ(2,t)$ is connected, $F_1\cap F_2\neq \emptyset$ is a vertex cut of $LeTQ(2,t)$. Since $F_1$ and $F_2$ are $1$-good-neighbor conditional faulty sets, $F_1\cap F_2$ is also a $1$-good-neighbor conditional faulty set, which implies $F_1\cap F_2$ is a $R^1$-vertex-cut of $LeTQ(2,t)$.
By Theorem~\ref{g-cut}, $|F_1\cap F_2|\geq 4$. Recall that $|F_1|,|F_2|\leq 4$, then $F_1=F_2$, a contradiction.
\q

By Claim 2, $LeTQ(2,t)-F_1-F_2$ has no isolated vertices. Then by Lemma~\ref{F1capF2}, $F_1\cap F_2\neq\emptyset$. Thus $F_1\cap F_2$ is a vertex cut of $LeTQ(2,t)$ by Proposition~\ref{MM}(1). Since $F_1$ and $F_2$ are $1$-good-neighbor conditional faulty sets, $F_1\cap F_2$ is also a $1$-good-neighbor conditional faulty set, which implies $F_1\cap F_2$ is a $R^1$-vertex-cut of $LeTQ(2,t)$. By Theorem~\ref{g-cut}, $|F_1\cap F_2|\geq 4$.  Recall that  $|F_1|,|F_2|\leq 4$, we have $F_1=F_2$, a contradiction.

Therefore, we complete the proof of Theorem~\ref{s=2}.
\end{proof}

\begin{theorem}
\label{g>1}
For $s=1$, $t\geq 2$ and $g=1$, or $3\leq s\leq t$ and $g=1$, or $2\leq s\leq t$ and $2\leq g\leq s$, $t_g(LeTQ(s,t))=2^g(s-g+2)-1$ under the MM$^*$ model.
\end{theorem}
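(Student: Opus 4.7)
The upper bound $t_g(LeTQ(s,t)) \le 2^g(s-g+2)-1$ is immediate from Theorem~\ref{geq}, so the task reduces to the matching lower bound. My plan is to mimic the PMC template of Lemma~\ref{leq1}, arguing by contradiction: suppose $(F_1,F_2)$ were an indistinguishable pair of distinct $g$-good-neighbor conditional faulty sets with $|F_1|,|F_2| \le 2^g(s-g+2)-1$ under the MM$^*$ model. Lemma~\ref{two} furnishes $F_1 \cup F_2 \neq V$, and the target contradiction $|F_2| \ge 2^g(s-g+2)$ would follow once I establish the two estimates $|F_1 \cap F_2| \ge 2^g(s-g+1)$ and $|F_2 - F_1| \ge 2^g$.

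The pivotal step is to show $LeTQ(s,t) - F_1 - F_2$ has no isolated vertices. Once this is in hand, Proposition~\ref{MM}(1) forces $E(F_1 \triangle F_2, V - F_1 - F_2) = \emptyset$, Lemma~\ref{F1capF2} gives $F_1 \cap F_2 \neq \emptyset$, and a routine check (every vertex outside $F_1 \cap F_2$ inherits its $g$ neighbors from being outside $F_1$ or outside $F_2$) makes $F_1 \cap F_2$ itself a $g$-good-neighbor conditional faulty set and hence an $R^g$-vertex-cut; Theorem~\ref{g-cut} delivers the first estimate. Likewise, each $v \in F_2 - F_1 \subseteq V - F_1$ has $\ge g$ non-$F_1$ neighbors, and the edge-free condition traps them in $F_2 - F_1$, so $\delta(LeTQ(s,t)[F_2-F_1]) \ge g$ and Proposition~\ref{LeTQ-neighbor} gives $|F_2 - F_1| \ge 2^g$.

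Ruling out isolated vertices is the main obstacle; let $W$ denote their set. In case~(iii), where $g \ge 2$, the argument is short: any $w \in W$ would have $\ge g \ge 2$ fault-free neighbors (from the good-neighbor property of $F_1$), all necessarily in $F_2 - F_1$ by isolation, contradicting the $\le 1$ cap imposed by Proposition~\ref{MM}(3); so $W = \emptyset$. Cases~(i) and (ii) with $g=1$ are harder: the same capping pins every $w \in W$ to exactly one neighbor in each of $F_1 - F_2$ and $F_2 - F_1$ with the remaining $d(w) - 2$ in $F_1 \cap F_2$. Setting $H = LeTQ(s,t)[V - F_1 - F_2 - W]$, I would split on $V(H) = \emptyset$ versus $V(H) \neq \emptyset$. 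When $V(H) = \emptyset$, combining $|V| = |F_1 \cup F_2| + |W|$, $|F_1 \cup F_2| \le 4s+2$, and the double count $|W| = |E(W, F_2-F_1)| \le (t+1)|F_2 - F_1| \le (t+1)(2s+1)$ forces $2^{s+t+1} \le 2st + 6s + t + 3$, which fails throughout the parameter ranges of~(i) and~(ii). When $V(H) \neq \emptyset$, Proposition~\ref{MM}(1) already makes $F_1 \cap F_2$ an $R^1$-vertex-cut, so $|F_1 \cap F_2| \ge 2s$ and $|F_1 - F_2|, |F_2 - F_1| \le 1$; the sub-subcase $F_1 - F_2 = \emptyset$ dies instantly because any $w \in W$ would need a neighbor in $V - F_2 = V - F_1 - F_2$, contradicting isolation. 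The remaining sub-subcase $|F_1 - F_2| = |F_2 - F_1| = 1$ with unique vertices $u, v$ is the true difficulty: triangle-freeness (Proposition~\ref{common neighbor}) forces $uv \notin E$ and every $w \in W$ to be a common neighbor of $u$ and $v$, so $|W| \le 2$. To close this subcase I would exploit the containment $N(u) \cup N(v) \subseteq (F_1 \cap F_2) \cup W$, the lower bound $|N(u) \cup N(v)| \ge d(u) + d(v) - 2 \ge 2s$, and the tight $R^1$-cut structure of $F_1 \cap F_2$ inherited from Lemma~\ref{g-neighbor}, pinning $u, v$ into a configuration inconsistent with the size budget of $F_1 \cup F_2$.
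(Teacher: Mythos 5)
Your skeleton coincides with the paper's: upper bound from Theorem~\ref{geq}; for the lower bound, assume an indistinguishable pair, prove there are no isolated vertices in $LeTQ(s,t)-F_1-F_2$, then combine Lemma~\ref{F1capF2}, Proposition~\ref{MM}(1), Theorem~\ref{g-cut} and Proposition~\ref{LeTQ-neighbor} to get $|F_2|\ge 2^g(s-g+1)+2^g$, a contradiction. Your treatment of $g\ge 2$ (the MM$^*$ cap of one neighbor in $F_2-F_1$ versus the $g$ good neighbors forced into $F_2-F_1$) and your $V(H)=\emptyset$ count ($2^{s+t+1}\le (4s+2)+(t+1)(2s+1)$, which indeed fails on all the relevant ranges, tightly at $(s,t)=(1,2)$) are both sound and essentially what the paper does.

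The genuine gap is exactly where you flag ``the true difficulty'': the subcase $g=1$, $V(H)\neq\emptyset$, $|F_1-F_2|=|F_2-F_1|=1$. Your proposed estimate does not close it: with $u\in F_1-F_2$, $v\in F_2-F_1$ you only get $2s\le d(u)+d(v)-2\le |N(u)\cup N(v)|\le |F_1\cap F_2|+|W|\le 2s+2$, which is perfectly consistent (even if one of $u,v$ lies in $R$ it fails only when $t\ge s+2$), so no contradiction follows from the size budget alone. Moreover, the ``tight $R^1$-cut structure of $F_1\cap F_2$ inherited from Lemma~\ref{g-neighbor}'' is not a usable tool: that lemma merely exhibits one particular minimum $R^g$-cut; nothing in the paper classifies all minimum $R^1$-cuts, so you cannot ``pin'' $u,v$ this way. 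What the paper actually does here is finer: it splits on $|W|=1$ and $|W|=2$ (recall $|W|\le 2$ by $K_{2,3}$-freeness), counts edges from $\{w,v_1,v_2\}$, respectively $\{w_1,w_2,v_1,v_2\}$, into $F_1\cap F_2$ using triangle-freeness and the two-common-neighbor bound to obtain $2s\ge 3s-2$, respectively $2s\ge 4s-4$, hence $s\le 2$; since $s=2$ with $g=1$ is excluded from this theorem, only $s=1$, $t\ge 2$ survives, and that residual case is eliminated by explicit structural arguments in $LeTQ(1,t)$ (the $L$-clusters are copies of $K_2$, the cross edges form a perfect matching, degree forces $t=2$, and the resulting $4$-cycles are impossible, as one checks on $LeTQ(1,2)$). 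Without this case analysis, or a substitute for it, your proof of the no-isolated-vertex claim -- and hence of the theorem -- is incomplete.
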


\begin{proof}
By Theorem~\ref{geq}, $t_g(LeTQ(s,t))\leq 2^g(s-g+2)-1$. In the following, we show that $t_g(LeTQ(s,t))\geq 2^g(s-g+2)-1$. That is, for any two
distinct $g$-good-neighbor conditional faulty sets $F_1$ and $F_2$ with $|F_1|, |F_2| \leq 2^g(s-g+2)-1$, we show that $(F_1, F_2)$ is a distinguishable pair. Suppose to the contrary that  $(F_1,F_2)$ is an indistinguishable pair. By Lemma~\ref{two}, we have $F_1\cup F_2 \neq V$. Since $F_1\neq F_2$, then $F_1\triangle F_2\neq \emptyset$.

\vskip 0.2cm
\noindent{\bf Claim 3.} $LeTQ(1,t)-F_1-F_2$ has no isolated vertices.
\vskip 0.2cm

\noindent{\bf Proof of Claim 3.} Let $W$ be the set of isolated vertices in $LeTQ(s,t)-F_1-F_2$, and $H$ the subgraph induced by the vertex set $V-F_1-F_2-W$. We will show that $W=\emptyset$. Suppose to the contrary that $W\neq \emptyset$ and let $w\in W$. Then $N_{V-F_1}(w)\subseteq F_2-F_1$.

Since $F_1$ is a $g$-good-neighbor conditional faulty set, then $|N_{V-F_1}(w)|\geq g$.
Note that $F_1$ and $F_2$ are indistinguishable, then $|N_{V-F_1}(w)| =1$ by Proposition~\ref{MM}(3). Hence $g=1$ and $|F_1|, |F_2| \leq 2s+1$. Similarly, $N_{V-F_2}(w)\subseteq F_1-F_2$ and $|N_{V-F_2}(w)| =1$. Therefore $|N_{F_1\cap F_2}(w)| \ge s-1$ by Proposition~\ref{degree}.

If $V(H)=\emptyset$, then $|V|=|F_1 \cup F_2|+|W|$. Note that $2^{s+t+1}\geq 4(s+t+1)$ as $s+t+1\geq 4$, then $|W|\geq 2^{s+t+1}-2(2s+1)\geq 4(s+t+1)-2(2s+1)>2(t+1)$. Thus
\begin{eqnarray*}2(s+1)(t+1)<(s+1)|W|\leq \sum_{w\in W}d_{V}(v)\leq \sum_{v\in F_1\cup F_2}d_V(v)\leq (t+1)2(s+1),
\end{eqnarray*}
a contradiction.

So $V(H)\neq \emptyset$. Since $F_1$ and $F_2$ are indistinguishable, then $E_{LeTQ(s,t)}(F_1\triangle F_2,V(H))=\emptyset$ by Proposition~\ref{MM}(1).
Thus $F_1\cap F_2\neq \emptyset$ is a vertex-cut of $LeTQ(s,t)$. By Theorem~\ref{g-cut}, $|F_1\cap F_2|\geq 2s$. Note that for any $w \in W$, $w$ has one neighbor in $F_1-F_2$ and one neighbor in $F_2-F_1$. So we have $|F_1\cap F_2| =2s$, which implies $|F_1| =|F_2| =2s+1$ and $|F_1-F_2| =|F_2-F_1| =1$.  Assume $F_1-F_2=\{v_1\}$ and $F_2-F_1=\{v_2\}$. Then for any $w \in W$, $v_1, v_2\in N_{V}(w)$. By Proposition~\ref{common neighbor}, $|W|\leq 2$, and $LeTQ(s,t)$ has no triangles, then $w$ and $v_i$ have no common neighbors for $i=1,2$.

\vskip 0.2cm
\noindent{\bf Case 1.} $|W|=1$.
\vskip 0.2cm

In this case $W=\{w\}$ and $|N_{F_1 \cap F_2}(w)|\ge s-1$, $|N_{F_1 \cap F_2}(v_1)\ge s$ and $|N_{F_1 \cap F_2}(v_2)|\ge s$ by Proposition~\ref{degree}.  By Proposition~\ref{common neighbor}, $v_1$ and $v_2$ have at most one common neighbor in $F_1\cap F_2$. Then $$2s=|F_1\cap F_2|\geq |E_{LeTQ(s,t)}(F_1\cap F_2,\{w,v_1,v_2\})|-1\geq (s-1)+s+(s-1)=3s-2,$$ i.e., $s\leq 2$. Therefore  $s=1$ and $t\geq 2$. Set $F_1\cap F_2=\{u_1,u_2\}$. If $w\in L$, we may assume that $v_2\in R$ as each vertex in $L$ has exactly one neighbor in $R$ by Proposition~\ref{LR2}. Then $d_V(v_2)=t+1$ by Proposition~\ref{degree}. Since $(N_{V}(v_2)\setminus \{w\})\subseteq F_1 \cap F_2$ and $|F_1 \cap F_2|=2$, then $t=2$  and $v_2u_1,v_2u_2\in E(LeTQ(1,2))$. Note that $d_V(v_1)\geq 2$, we may assume $v_1u_1\in E(LeTQ(1,2))$. Then $LeTQ(1,2)$ contains a four cycle $wv_1u_1v_2$ with $w\in L$, which is impossible by Figure 1. So $w\in R$. Assume that $wu_1\in E(LeTQ(1,t))$. Then $v_1u_2,v_2u_2\in E(LeTQ(1,t))$ and $wu_2\notin E(LeTQ(1,t))$ as $LeTQ(1,t)$ has no triangles. Thus $t=2$. Hence $LeTQ(1,2)$ contains a four cycle $wv_1u_2v_2$ with $v_1,v_2\in L$, which is impossible by Figure 1.

\vskip 0.2cm
\noindent{\bf Case 2.} $|W|=2$.
\vskip 0.2cm

Denote $W=\{w_1,w_2\}$. Then $|N_{F_1 \cup F_2}(w_i)|\ge s-1$ for $1\leq i\leq 2$, $|N_{F_1 \cap F_2}(v_1)|\ge s-1$ and $|N_{F_1 \cup F_2}(v_2)|\ge s-1$ by Proposition~\ref{degree}.  By Proposition~\ref{common neighbor}, $v_1$ and $v_2$ have no common neighbors in $F_1\cup F_2$, $w_1$ and $w_2$ have no common neighbors in $F_1\cup F_2$. Then  $$2s=|F_1\cap F_2|\geq |E_{LeTQ(s,t)}(F_1\cap F_2,\{w_1,w_2,v_1,v_2\})|\geq 4(s-1)=4s-4,$$
i.e., $s\leq 2$. Therefore $s=1$ and $t\geq 2$. Note that $v_1w_1v_2w_2$ is a four cycle of $LeTQ(1,t)$. By Proposition~\ref{LR2}, each vertex in $L$ has exactly one neighbor in $R$ and vice visa. Then $|\{v_1,w_1,v_2,w_2\}\cap L|=4$, or 2, or 0. By Proposition~\ref{LR1}, the subgraph induced by $L$ are disjoint copies of $LTQ_1\cong K_2$, then $|\{v_1,w_1,v_2,w_2\}\cap L|\neq 4$.  If $|\{v_1,w_1,v_2,w_2\}\cap L|=2$, then $t=2$ as $|F_1\cap F_2|=2$. Thus $LeTQ(1,2)$ contains a four cycle $v_1w_1v_2w_2$ with $|\{v_1,w_1,v_2,w_2\}\cap L|=2$, which is impossible by Figure 1. If $|\{v_1,w_1,v_2,w_2\}\cap L|=0$, then $d_V(x)=t+1\geq 3$ for $x\in \{v_1,w_1,v_2,w_2\}$. Thus $|F_1\cap F_2|\geq 4$, a contradiction.
\q

By Claim 3, $LeTQ(s,t)-F_1-F_2$ has no isolated vertices. Then by Lemma~\ref{F1capF2}, $F_1\cap F_2\neq\emptyset$. Since $F_1$ and $F_2$ are indistinguishable, by Proposition~\ref{MM}(1), $F_1\cap F_2$ is a vertex cut of $LeTQ(s,t)$. Note that $F_1\cap F_2$ is also a $g$-good-neighbor conditional faulty set, and hence $F_1\cap F_2$ is a $R^g$-vertex-cut of $LeTQ(s,t)$. By Theorem~\ref{g-cut}, $|F_1\cap F_2|\geq 2^g(s-g+1)$.

Since $F_1$ is a $g$-good-neighbor conditional faulty set, all the vertices in $F_2-F_1$ have at least $g$ neighbors out of $F_1$.
By Proposition~\ref{MM}(1), $E_{LeTQ(s,t)}(F_1\triangle F_2,V-F_1-F_2)=\emptyset$, then $\delta(LeTQ(s,t)[F_2-F_1])\geq g$. Thus $|F_2-F_1|\geq 2^g$ by Proposition~\ref{LeTQ-neighbor}. So
$$2^g(s-g+2)-1\geq |F_2| = |F_1\cap F_2| + |F_2-F_1| \geq 2^g(s-g+1)+2^g=2^g(s-g+2),$$
a contradiction.

Therefore, we complete the proof of Theorem~\ref{g>1}.
\end{proof}

\section{Conclusions}

In this paper, we consider the $g$-good-neighbor conditional diagnosability of the
locally exchanged twisted cube $LeTQ(s,t)$ under the PMC model and MM$^*$ model, respectively. We
show that when $1\leq s\leq t$ and $0\leq g\leq s$, the $g$-good-neighbor conditional diagnosability of $LeTQ(s,t)$ under
the PMC model is $t_g(LeTQ(s,t))=2^g(s-g+2)-1$.
When $1\leq s\leq t$ and $0\leq g\leq s$, the $g$-good-neighbor conditional diagnosability of $LeTQ(s,t)$ under the
MM$^*$ model is $t_g(LeTQ(s,t))=2^g(s-g+2)-\epsilon$, where $\epsilon=3$ if $g=1$ and $s=t=1$, $\epsilon=2$ if $g=0$ and $s=t=1$, or $g=1$ and $s=2$, and $\epsilon=1$ otherwise.

Compared with the conventional diagnosability, the $g$-good-neighbor conditional diagnosability improves accuracy in measuring the reliability of interconnection networks in heterogeneous environments. Future research on this topic will involve studying the $g$-good-neighbor conditional
diagnosability of other interconnection networks.

\vskip 0.6cm

\noindent{\bf \Large Acknowledgments}

\vskip 0.3cm

\noindent Huiqing Liu is partially supported by NNSFC under grant numbers 11571096 and 61373019. Xiaolan Hu is partially supported
by NNSFC under grant number 11601176, NSF of Hubei Province under grant number 2016CFB146, and self-determined research funds of CCNU from the colleges' basic research and operation of MOE.

\end{document}